\newtheorem{theorem}{Theorem}[section]
\newtheorem{corollary}{Corollary}
\theoremstyle{definition}
\newtheorem{remark}{Remark}
\newtheorem{example}{Example}
\def\a{\alpha}
\def\l{\lambda}
\def\RIT{{_tI_T^{1-\a}}}
\def\LCD{{^C_aD_t^\a}}
\def\RDT{{_tD_T^\a}}
\def\RD{{_tD_b^\a}}
\def\LDa{{_aD_t^\a}}
\title[Fractional order optimal control problems]{Fractional Order Optimal Control Problems\\
with Free Terminal Time}
\author[S. Pooseh, R. Almeida and D. F. M. Torres]{}
\subjclass{Primary: 26A33, 33F05; Secondary: 49K15.}
\keywords{Optimization and control, fractional calculus, fractional optimal control,
free-time problem, numerical approximations.}
\email{spooseh@ua.pt}
\email{ricardo.almeida@ua.pt}
\email{delfim@ua.pt}
\thanks{Part of first author's Ph.D., which is carried out
at the University of Aveiro under the Doctoral Program
in Mathematics and Applications (PDMA)
of Universities of Aveiro and Minho.}
\begin{document}

\maketitle

\centerline{\scshape Shakoor Pooseh, Ricardo Almeida and Delfim F. M. Torres}
\medskip
{\footnotesize
 \centerline{CIDMA --- Center for Research and Development in Mathematics and Applications}
 \centerline{Department of Mathematics, University of Aveiro, 3810-193 Aveiro, Portugal}
}

\bigskip


\begin{abstract}
We consider fractional order optimal control problems in which
the dynamic control system involves integer
and fractional order derivatives and the terminal time is free.
Necessary conditions for a state/control/terminal-time triplet
to be optimal are obtained. Situations with constraints present
at the end time are also considered.
Under appropriate assumptions, it is shown that the
obtained necessary optimality conditions become sufficient.
Numerical methods to solve the problems are presented,
and some computational simulations are discussed in detail.
\end{abstract}


\section{Introduction}

Fractional calculus generalizes the standard integral
and differential calculus to arbitrary order.
If the order of the fractional derivative operator is an integer $m$,
then we recover an $m$-fold integral when $m$ is negative,
and the classical derivative of order $m$ when $m$ is positive.
Let $x:[a,b]\to\mathbb{R}$ be a function, and let $\alpha$ be
a real number representing the order of the integral/derivative of $x$.
The most common fractional operators considered in the literature
are the Riemann--Liouville fractional integrals (RLFI) and derivatives (RLFD),
and the Caputo fractional derivatives (CFD):
\begin{align}
{_aI_t^{\alpha}}x(t)
&= \displaystyle\frac{1}{\Gamma(\alpha)}\int_a^t (t-\tau)^{\alpha-1}x(\tau) d\tau,
\tag{left  RLFI}\\
{_tI_b^{\alpha}}x(t)
&= \displaystyle\frac{1}{\Gamma(\alpha)}\int_t^b (\tau-t)^{\alpha-1}x(\tau) d\tau,
\tag{right  RLFI}\\
{_aD_t^{\alpha}}x(t)
&= \displaystyle\frac{1}{\Gamma(n-\alpha)}\frac{d^n}{dt^n}
\int_a^t (t-\tau)^{n-\alpha-1}x(\tau) d\tau,
\tag{left  RLFD}\\
{_tD_b^{\alpha}}x(t)
&= \displaystyle\frac{(-1)^n}{\Gamma(n-\alpha)}\frac{d^n}{dt^n}
\int_t^b (\tau-t)^{n-\alpha-1}x(\tau) d\tau,
\tag{right  RLFD}\\
{_a^CD_t^{\alpha}}x(t)
&= \displaystyle\frac{1}{\Gamma(n-\alpha)}
\int_a^t (t-\tau)^{n-\alpha-1}x^{(n)}(\tau) d\tau,
\tag{left  CFD}\\
{_t^CD_b^{\alpha}}x(t)
&= \displaystyle\frac{(-1)^n}{\Gamma(n-\alpha)}
\int_t^b (\tau-t)^{n-\alpha-1}x^{(n)}(\tau) d\tau,
\tag{right CFD}
\end{align}
where $n$ stands for $\lfloor\alpha\rfloor+1$
($n$ is the smallest integer larger than $\alpha$),
and $\Gamma$ is the gamma function, that is,
$$
\Gamma(z)=\int_0^\infty t^{z-1}e^{-t}\, dt, \quad z>0.
$$
If $x$ and $x^{(i)}$, $i=1,\ldots,n-1$,
vanish at $t=a$, then ${_aD_t^{\alpha}}x(t)={_a^CD_t^{\alpha}}x(t)$,
and if they vanish at $t=b$, then ${_tD_b^{\alpha}}x(t)={_t^CD_b^{\alpha}}x(t)$.
The reader interested in the theory and applications
of fractional calculus is referred to the books \cite{Kilbas,Miller}.
For the state of the art on fractional order optimal control,
see the recent book \cite{book:FCV}.

For numerical purposes, it is sometimes useful to approximate the fractional
operators as sums of integer order (standard) derivatives. The most common procedure
is to replace the Riemann--Liouville fractional derivative by the series
$$
{_aD_t^{\alpha}} x(t)=\sum_{k=0}^\infty \binom{\a}{k}\frac{(t-a)^{k-\a}}{\Gamma(k+1-\a)}x^{(k)}(t),
\quad \text{ where } \binom{\a}{k}=\frac{(-1)^{k-1}\a\Gamma(k-\a)}{\Gamma(1-\a)\Gamma(k+1)}
$$
(cf. \cite{Miller}). This formula can be easily deduced by applying
integration-by-parts to the integral
$$
\frac{1}{\Gamma(n-\alpha)}\frac{d^n}{dt^n}\int_a^t (t-\tau)^{n-\alpha-1}x(\tau) d\tau.
$$
The main drawback is that to obtain good accuracy,
we need higher-order derivatives of $x$, which restricts
the class of admissible functions that one can consider.
Recently, in \cite{Atan2}, a new expansion formula was obtained,
with the advantage that we only need the first derivative:
$$
\LDa x(t)=A(\a)(t-a)^{-\a}x(t)+B(\a)(t-a)^{1-\a}\dot{x}(t)
-\sum_{p=2}^{\infty}C(\a,p)(t-a)^{1-p-\a}V_p(t),
$$
where $V_p(t)$ is defined as the solution of the system
$$
\begin{cases}
\dot{V}_p(t)=(1-p)(t-a)^{p-2}x(t),\\
V_p(a)=0,
\end{cases}
$$
$p=2,3,\ldots$, and the coefficients $A,B$ and $C$
are given by the formulas
\begin{equation*}
\begin{split}
A(\a)&=\displaystyle\frac{1}{\Gamma(1-\a)}
\left[1+\sum_{p=2}^{\infty}\frac{\Gamma(p-1+\a)}{\Gamma(\a)(p-1)!}\right],\\[0.3cm]
B(\a)&=\displaystyle\frac{1}{\Gamma(2-\a)}
\left[1+\sum_{p=1}^{\infty}\frac{\Gamma(p-1+\a)}{\Gamma(\a-1)p!}\right],\\[0.3cm]
C(\a,p)&=\displaystyle\frac{1}{\Gamma(2-\a)\Gamma(\a-1)}\frac{\Gamma(p-1+\a)}{(p-1)!}.
\end{split}
\end{equation*}
For computational purposes, we truncate the sum and consider the finite expansion
\begin{multline}
\label{expanMom}
\LDa x(t)\simeq A(\a,N)(t-a)^{-\a}x(t)+B(\a,N)(t-a)^{1-\a}\dot{x}(t)\\
-\sum_{p=2}^N C(\a,p)(t-a)^{1-p-\a}V_p(t),
\end{multline}
where $A(\a,N)$ and $B(\a,N)$ are now defined by
\begin{equation*}
\begin{split}
A(\a,N)&=\displaystyle\frac{1}{\Gamma(1-\a)}
\left[1+\sum_{p=2}^N\frac{\Gamma(p-1+\a)}{\Gamma(\a)(p-1)!}\right],\\
B(\a,N)&=\displaystyle\frac{1}{\Gamma(2-\a)}
\left[1+\sum_{p=1}^N\frac{\Gamma(p-1+\a)}{\Gamma(\a-1)p!}\right].
\end{split}
\end{equation*}
We refer to \cite{Pooseh2} where expansion formulas with higher-order derivatives
are obtained, and an error estimation is proven. In \cite{Pooseh3} analogous results
are proven for the Riemann--Liouville fractional integral and in \cite{Pooseh1}
for Hadamard fractional operators.
For the right Riemann--Liouville fractional derivative, we have
\begin{multline}
\label{expanMomR}
\RD x(t)\simeq A(\a,N)(b-t)^{-\a}x(t)-B(\a,N)(b-t)^{1-\a}\dot{x}(t)\\
+\sum_{p=2}^NC(\a,p)(b-t)^{1-p-\a}W_p(t),
\end{multline}
where $W_p$ is the solution of the differential equation
$$
\begin{cases}
\dot{W}_p(t)=-(1-p)(b-t)^{p-2}x(t),\\
W_p(b)=0.
\end{cases}
$$
To approximate the Caputo fractional derivatives, we may use the formulas
\begin{equation*}
{_a^CD_t^\alpha}x(t)={_aD_t^\alpha}x(t)
-\sum_{k=0}^{n-1}\frac{x^{(k)}(a)}{\Gamma(k-\alpha+1)}(t-a)^{k-\alpha}
\end{equation*}
and
$$
{_t^CD_b^\alpha}x(t)={_tD_b^\alpha}x(t)
-\sum_{k=0}^{n-1}\frac{x^{(k)}(b)}{\Gamma(k-\alpha+1)}(b-t)^{k-\alpha},
$$
that relate Caputo and Riemann--Liouville fractional operators,
and then use \eqref{expanMom} and \eqref{expanMomR}.
In this article we are interested in investigating such ideas
in the context of fractional order optimal control.
An important tool is the integration by parts
formula for Caputo fractional derivatives,
which is stated in the following theorem.

\begin{theorem}[cf., e.g., \cite{Agrawal10}]
Let $\alpha\in(0,1)$, and $x,y:[a,b]\to\mathbb{R}$ be two functions of class $C^1$.
Then the following integration by parts formula holds:
\begin{equation*}
\int_{a}^{b}y(t) {_a^C D_t^\alpha}x(t)dt
=\left[{_t I_b^{1-\alpha}}y(t) x(t)\right]_a^b
+ \int_a^b x(t) {_t D_b^\alpha}y(t)dt.
\end{equation*}
\end{theorem}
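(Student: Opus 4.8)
The plan is to reduce the identity to the classical (integer-order) integration by parts formula combined with Fubini's theorem. Since $\a\in(0,1)$ we have $n=\lfloor\a\rfloor+1=1$, so the left Caputo derivative is nothing but the left Riemann--Liouville integral of order $1-\a$ applied to $\dot x$:
\[
{_a^CD_t^\a}x(t)=\frac{1}{\Gamma(1-\a)}\int_a^t(t-\t)^{-\a}\dot x(\t)\,d\t={_aI_t^{1-\a}}\dot x(t).
\]
Inserting this in the left-hand side turns $\int_a^b y(t)\,{_a^CD_t^\a}x(t)\,dt$ into the double integral of $\Gamma(1-\a)^{-1}y(t)(t-\t)^{-\a}\dot x(\t)$ over the triangle $\{(t,\t):a\le\t\le t\le b\}$. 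Because $\a<1$ the kernel $(t-\t)^{-\a}$ is locally integrable, while $y$ and $\dot x$ are continuous, hence bounded, on $[a,b]$; thus the integral converges absolutely and Fubini's theorem lets us exchange the order of integration. Recognising the inner integral as a right Riemann--Liouville integral gives
\[
\int_a^b y(t)\,{_a^CD_t^\a}x(t)\,dt=\int_a^b \dot x(t)\,{_tI_b^{1-\a}}y(t)\,dt .
\]

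Next I would apply ordinary integration by parts to the last integral, taking $v=x$ and $u={_tI_b^{1-\a}}y$. For this one verifies that $t\mapsto{_tI_b^{1-\a}}y(t)$ is absolutely continuous on $[a,b]$: after the substitution $\t=t+s$ it equals $\Gamma(1-\a)^{-1}\int_0^{b-t}s^{-\a}y(t+s)\,ds$, which is continuous up to $t=b$, of class $C^1$ on $[a,b)$, and has a derivative with only an integrable $(b-t)^{-\a}$ singularity at the endpoint. Comparing that derivative with the definition of the right Riemann--Liouville fractional derivative in the case $n=1$,
\[
\frac{d}{dt}\,{_tI_b^{1-\a}}y(t)=\frac{1}{\Gamma(1-\a)}\frac{d}{dt}\int_t^b(\t-t)^{-\a}y(\t)\,d\t=-\,{_tD_b^\a}y(t),
\]
and substituting into the integration by parts formula yields exactly
\[
\int_a^b \dot x(t)\,{_tI_b^{1-\a}}y(t)\,dt=\Big[{_tI_b^{1-\a}}y(t)\,x(t)\Big]_a^b+\int_a^b x(t)\,{_tD_b^\a}y(t)\,dt,
\]
which is the claimed formula. (One may note in passing that the boundary contribution at $t=b$ vanishes, since the integration interval there collapses.)

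The computations are routine; the only points requiring care are the two appearances of the singular kernel. I expect the main obstacle to be the bookkeeping around the singularity: justifying the integrability hypothesis needed for Fubini's theorem on the triangular domain, and justifying differentiation under the integral sign --- equivalently, the absolute continuity of $t\mapsto{_tI_b^{1-\a}}y(t)$ --- so that the classical integration by parts formula is legitimately applicable. The change of variables $\t=t+s$ mentioned above, together with the $C^1$ regularity of $x$ and $y$, makes both of these transparent, and the rest is just collecting terms.
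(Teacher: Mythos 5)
Your proof is correct. The paper does not actually prove this statement --- it is quoted from the literature (the citation to Agrawal) and used as a tool --- so there is nothing internal to compare against; your argument is the standard one: write ${_a^CD_t^\alpha}x={_aI_t^{1-\alpha}}\dot x$ (valid since $n=1$ for $\alpha\in(0,1)$), swap the order of integration over the triangle via Fubini using the local integrability of $(t-\tau)^{-\alpha}$, and then apply classical integration by parts after identifying $\frac{d}{dt}\,{_tI_b^{1-\alpha}}y(t)=-\,{_tD_b^\alpha}y(t)$, with the absolute continuity of $t\mapsto{_tI_b^{1-\alpha}}y(t)$ justified as you indicate. All the steps, including the vanishing of the boundary term at $t=b$, check out.
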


The text is organized as follows. In Section~\ref{SecNecCond}
we formulate the optimal control problem under consideration
and deduce necessary optimality conditions for it
(Theorem~\ref{Opt:MainTheo}). Another approach consists
of using the approximation methods mentioned above,
thereby converting the original problem into
a classical optimal control problem that
can be solved by standard computational techniques
(Section~\ref{sec:2.2}). A generalization of the obtained
results is considered in Section~\ref{sec:3},
where the lower bound of the cost integral
is different from the lower bound of the fractional derivative
(Theorem~\ref{thm:mt:gen}). Under some additional assumptions,
the necessary optimality conditions
are also sufficient (Theorem~\ref{thm:suff:cond}).
We end with Section~\ref{sec:5} containing numerical computations.


\section{Necessary optimality conditions}
\label{SecNecCond}

Let $\a\in(0,1)$, $a\in\mathbb{R}$, $L$ and $f$ be two differentiable functions
with domain $[a,+\infty)\times \mathbb{R}^2$, and
$\phi : [a,+\infty)\times \mathbb{R} \rightarrow \mathbb{R}$
a differentiable function. The fundamental problem
is stated in the following way:
\begin{equation}
\label{Opt:func}
\mathrm{minimize} \quad
J(x,u,T)=\int_a^T L(t,x(t),u(t))\,dt+\phi(T,x(T))
\end{equation}
subject to the control system
\begin{equation}
\label{Opt:dynamic}
M \dot{x}(t) + N\LCD x(t) = f(t,x(t),u(t))
\end{equation}
and the initial boundary condition
\begin{equation}
\label{Opt:bound}
x(a)=x_a
\end{equation}
with $(M,N)\not=(0,0)$ and $x_a$ a fixed real number.
Our goal is to generalize previous works
on fractional optimal control problems
by considering the end time $T$ free and the dynamic control system \eqref{Opt:dynamic}
involving integer and fractional order derivatives. For convenience,
we consider the one-dimensional case. However, using similar techniques as the ones given here,
the results can be easily extended to problems with multiple states and multiple controls.
Later we consider the cases $T$ and/or $x(T)$ fixed.
Here, $T$ is a variable number with $a< T<\infty$.
Thus, we are interested not only on the optimal trajectory
$x$ and optimal control function $u$,
but also on the corresponding time $T$ for which
the functional $J$ attains its minimum value.
We assume that the state variable $x$ is differentiable and that
the control $u$ is piecewise continuous. When $N=0$ we obtain a classical
optimal control problem; the case $M=0$ with fixed $T$ has already been
studied for different types of fractional order derivatives
(see, e.g., \cite{Agrawal1,Agrawal2,Agrawal3,FreTorres,FT,Tricaud,Tricaud2}).
In \cite{Jelicic} a special type of the proposed problem is also studied
for fixed $T$.

\begin{remark}
In this paper the terminal time $T$ is a free decision variable
and, \textit{a priori}, no constraints are imposed.
For future research, one may wish to consider a class of
fractional optimal control problems in which the terminal time
is governed by a stopping condition. Such problems were recently investigated,
within the classical (integer order) framework, in \cite{Lin1,Lin2}.
\end{remark}


\subsection{Fractional necessary conditions}

To deduce necessary optimality conditions that an optimal triplet
$(x,u,T)$ must satisfy, we use a Lagrange multiplier
to adjoin the dynamic constraint \eqref{Opt:dynamic}
to the performance functional \eqref{Opt:func}. To start,
we define the Hamiltonian function $H$ by
\begin{equation}
\label{def:Hamiltonian}
H(t,x,u,\l)=L(t,x,u)+\l f(t,x,u),
\end{equation}
where $\l$ is a Lagrange multiplier,
so that we can rewrite the initial problem as minimizing
$$
\mathcal{J}(x,u,T,\l)=\int_a^T \left[H(t,x,u,\l)-\l(t)[M \dot{x}(t)+N\LCD x(t)]\right]\,dt+\phi(T,x(T)).
$$
Next, we consider variations of the form
$$
x+ \delta x, \quad u+\delta u, \quad T+\delta T, \quad \l+\delta\l
$$
with $\delta x(a)=0$ by the imposed boundary condition \eqref{Opt:bound}.
Using the well-known fact that the first variation of $\mathcal J$
must vanish when evaluated along a minimizer, we get
\begin{multline*}
0 = \int_a^T\Biggl(\frac{\partial H}{\partial x}\delta x+\frac{\partial H}{\partial u}\delta u
+\frac{\partial H}{\partial \l}\delta \l-\delta \l\left(M \dot{x}(t)
+N\LCD x(t)\right)\\
-\l(t)\left(M \dot{\delta x}(t)+N\LCD\delta x(t)\right)\Biggr)dt
+\delta T\bigl[H(t,x,u,\l)\\
-\l(t)\left(M \dot{x}(t)+N\LCD x(t)\right)\bigr]_{t=T}
+\frac{\partial\phi}{\partial t}(T,x(T)) \delta T
+\frac{\partial \phi}{\partial x}(T,x(T))\left(\dot{x}(T)\delta T+\delta x(T)\right)
\end{multline*}
with the partial derivatives of $H$ evaluated at $(t,x(t),u(t),\l(t))$.
Integration by parts gives the relations
$$
\int_a^T\l(t) \dot{\delta x}(t)\, dt
=-\int_a^T \delta x(t)  \dot{\l}(t)\,dt+\delta x(T)\l(T)
$$
and
$$
\int_a^T\l(t) \LCD \delta x(t)\,dt
=\int_a^T \delta x(t) \, \RDT\l(t)\,dt
+\delta x(T)[\RIT\l(t)]_{t=T}.
$$
Thus, we deduce the following formula:
\begin{multline*}
\int_a^T\left[\delta x \left(\frac{\partial H}{\partial x}
+M \dot{\l}(t) -N\RDT\l(t)\right)+\delta u \frac{\partial H}{\partial u}\right.\\
\left.+\delta \l\left( \frac{\partial H}{\partial \l}-M \dot{x}(t)-N\LCD x(t)\right)\right]dt\\
-\delta x(T)\left[M \l(t)+N\RIT\l(t)-\frac{\partial \phi}{\partial x}(t,x(t))\right]_{t=T}\\
+\delta T[H(t,x,u,\l)-\l(t)[M \dot{x}(t)+N\LCD x(t)]+\frac{\partial \phi}{\partial t}(t,x(t))
+\frac{\partial \phi}{\partial x}(t,x(t))\dot{x}(t)]_{t=T}=0.
\end{multline*}
Now, define the new variable
$$
\delta x_T=(x+\delta x)(T+\delta T)-x(T).
$$
Because $\dot{\delta x}(T)$ is arbitrary,
in particular one can consider variation functions
for which $\dot{\delta x}(T)=0$.
By Taylor's theorem,
$$
(x+\delta x)(T+\delta T)-(x+\delta x)(T)=\dot{x}(T)\delta T+O(\delta T^2),
$$
where $\displaystyle \lim_{\zeta\to0} \frac{O(\zeta)}{\zeta}$ is finite, and so
$\delta x(T)=\delta x_T-\dot{x}(T)\delta T+O(\delta T^2)$.
In conclusion, we arrive at the expression
\begin{multline*}
\delta T\left[H(t,x,u,\l)-N\l(t)\LCD x(t)+N\dot{x}(t)\RIT\l(t)
+\frac{\partial \phi}{\partial t}(t,x(t))\right]_{t=T}\\
+\int_a^T\left[\delta x \left(\frac{\partial H}{\partial x}
+M \dot{\l}(t)-N\RDT\l(t)\right)
+\delta \l\left(\frac{\partial H}{\partial \l}-M \dot{x}(t)-N\LCD x(t)\right)\right.\\
\left.+\delta u \frac{\partial H}{\partial u}\right]dt
-\delta x_T\left[M \l(t)+N\RIT\l(t)-\frac{\partial \phi}{\partial x}(t,x(t))\right]_{t=T}
+O(\delta T^2)=0.
\end{multline*}
Since the variation functions were chosen arbitrarily, the following theorem is proven.

\begin{theorem}
\label{Opt:MainTheo}
If $(x,u,T)$ is a minimizer of \eqref{Opt:func} under the dynamic constraint
\eqref{Opt:dynamic} and the boundary condition \eqref{Opt:bound},
then there exists a function $\l$ for which the triplet $(x,u,\l)$  satisfies:
\begin{itemize}
\item the \emph{Hamiltonian system}
\begin{equation}
\label{OPT:HamilSyst}
\begin{cases}
M \dot{\l}(t) - N\RDT\l(t) = - \frac{\partial H}{\partial x}(t,x(t),u(t),\l(t))\\
M \dot{x}(t)+N\LCD x(t) = \frac{\partial H}{\partial \l}(t,x(t),u(t),\l(t))
\end{cases}
\end{equation}
for all $t\in[a,T]$;

\item the \emph{stationary condition}
\begin{equation}
\label{OPT:stacionary}
\frac{\partial H}{\partial u}(t,x(t),u(t),\l(t))=0
\end{equation}
for all $t\in[a,T]$;

\item and the \emph{transversality conditions}
\begin{equation}
\label{OPT:transversality}
\begin{gathered}
\left[H(t,x(t),u(t),\l(t))-N\l(t)\LCD x(t)+N\dot{x}(t)\RIT\l(t)
+\frac{\partial \phi}{\partial t}(t,x(t))\right]_{t=T}=0,\\
\left[M \l(t) +N\RIT\l(t)-\frac{\partial \phi}{\partial x}(t,x(t))\right]_{t=T}=0;
\end{gathered}
\end{equation}
\end{itemize}
where the Hamiltonian $H$ is defined by \eqref{def:Hamiltonian}.
\end{theorem}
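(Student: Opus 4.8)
The plan is to establish the three groups of conditions by the classical Lagrange-multiplier technique of the calculus of variations, adapted to the two new features of the problem: the free terminal time $T$ and the simultaneous presence of $\dot x$ and $\LCD x$ in the dynamics~\eqref{Opt:dynamic}. First I would adjoin the control system to the cost by a multiplier $\l=\l(t)$, forming the augmented functional
$$
\mathcal{J}(x,u,T,\l)=\int_a^T\left[H(t,x,u,\l)-\l(t)\bigl(M\dot x(t)+N\LCD x(t)\bigr)\right]dt+\phi(T,x(T)),
$$
with $H$ as in~\eqref{def:Hamiltonian}, and then impose that the first variation of $\mathcal J$ vanish along an optimal triplet for all admissible variations $(\delta x,\delta u,\delta T,\delta\l)$ subject only to $\delta x(a)=0$, which is forced by~\eqref{Opt:bound}.

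Next I would compute that first variation. Differentiating under the integral sign produces, besides the harmless terms $\tfrac{\partial H}{\partial x}\delta x$, $\tfrac{\partial H}{\partial u}\delta u$ and the $\delta\l$-terms, two terms carrying derivatives of $\delta x$, namely $-\l M\dot{\delta x}$ and $-\l N\LCD\delta x$. The first is handled by ordinary integration by parts, contributing the boundary term $\l(T)\delta x(T)$; the second by the Caputo integration by parts formula of the theorem above, applied on $[a,T]$ in place of $[a,b]$, contributing $\bigl[\RIT\l(t)\bigr]_{t=T}\delta x(T)$ --- here the contribution at the lower limit drops out because $\delta x(a)=0$, while the term at $t=T$ must be retained since $\l$, being the solution of a right fractional differential equation, need not be bounded at the terminal time. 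Differentiating the variable upper limit $T$ of the integral, together with $\phi(T,x(T))$, contributes the endpoint terms proportional to $\delta T$ and to $\dot x(T)\delta T$.

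The genuinely new step is disentangling the endpoint, because at a free terminal time $\delta x(T)$ and $\delta T$ are not independent. I would introduce the total endpoint variation $\delta x_T=(x+\delta x)(T+\delta T)-x(T)$ and, by Taylor's theorem, write $\delta x(T)=\delta x_T-\dot x(T)\delta T+O(\delta T^2)$; substituting this and collecting terms rewrites the first-variation identity as a sum of an integral over $[a,T]$ that is linear in $\delta x,\delta u,\delta\l$, a term proportional to $\delta T$, and a term proportional to $\delta x_T$, plus $O(\delta T^2)$. This is exactly the displayed expression preceding the theorem.

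Finally I would invoke the fundamental lemma of the calculus of variations in two stages. Restricting first to variations with $\delta T=0$ and $\delta x_T=0$ leaves $\delta x$ (with $\delta x(a)=0$), $\delta u$ and $\delta\l$ free and mutually independent, so vanishing of the integral forces each bracketed integrand to be zero; this yields the adjoint equation $M\dot\l-N\RDT\l=-\tfrac{\partial H}{\partial x}$, the state equation $M\dot x+N\LCD x=\tfrac{\partial H}{\partial\l}$, which is merely~\eqref{Opt:dynamic} rewritten since $\tfrac{\partial H}{\partial\l}=f$, and the stationary condition $\tfrac{\partial H}{\partial u}=0$, that is,~\eqref{OPT:HamilSyst} and~\eqref{OPT:stacionary}. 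With the integral now absent, letting $\delta T$ and then $\delta x_T$ be arbitrary yields the two transversality conditions~\eqref{OPT:transversality}. The main obstacle is purely bookkeeping: keeping correct track of all endpoint contributions at $t=T$ --- in particular not discarding $\bigl[\RIT\l(t)\bigr]_{t=T}$ --- and performing the $\delta x_T$ substitution carefully enough that afterwards $\delta x$, $\delta u$, $\delta\l$, $\delta T$ and $\delta x_T$ can legitimately be varied one at a time.
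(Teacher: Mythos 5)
Your proposal is correct and follows essentially the same route as the paper: adjoining the dynamics via a Lagrange multiplier, integrating by parts (classically for the $M\dot{\delta x}$ term and via the Caputo integration by parts formula for the $N\LCD\delta x$ term), introducing $\delta x_T$ with the Taylor expansion $\delta x(T)=\delta x_T-\dot x(T)\delta T+O(\delta T^2)$, and then letting the variations range independently. The only difference is cosmetic: you make explicit the two-stage application of the fundamental lemma that the paper leaves implicit in its closing sentence.
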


\begin{remark}
In standard optimal control, a free terminal time problem
can be converted into a fixed final time problem
by using the well-known transformation $s = t/T$ (see Example~\ref{exm42}).
This transformation does not work in the fractional setting.
Indeed, in standard optimal control, translating the problem from time $t$
to a new time variable $s$ is straightforward: the chain rule gives
$\frac{d x}{d s} = \frac{d x}{d t} \frac{d t}{d s}$.
For Caputo or Riemann--Liouville fractional derivatives,
the chain rule is not valid and such conversion is not possible.
\end{remark}

Some interesting special cases are obtained when restrictions
are imposed on the end time $T$ or on $x(T)$.

\begin{corollary}
\label{OPT:MainCor}
Let $(x,u)$ be a minimizer of \eqref{Opt:func} under the dynamic constraint
\eqref{Opt:dynamic} and the boundary condition \eqref{Opt:bound}.
\begin{enumerate}
\item If $T$ is fixed and $x(T)$ is free,
then Theorem~\ref{Opt:MainTheo} holds with the transversality conditions
\eqref{OPT:transversality} replaced by
$$
\left[M \l(t)+N\RIT\l(t)-\frac{\partial \phi}{\partial x}(t,x(t))\right]_{t=T}=0.
$$

\item If $x(T)$ is fixed and $T$ is free,
then Theorem~\ref{Opt:MainTheo} holds with the transversality
conditions \eqref{OPT:transversality} replaced by
$$
\left[H(t,x(t),u(t),\l(t))-N\l(t)\LCD x(t)+N\dot{x}(t)\RIT\l(t)
+\frac{\partial \phi}{\partial t}(t,x(t))\right]_{t=T}=0.
$$

\item If $T$ and $x(T)$ are both fixed,
then Theorem~\ref{Opt:MainTheo} holds
with no transversality conditions.

\item If the terminal point $x(T)$ belongs to a fixed curve, i.e.,
$x(T)=\gamma(T)$ for some differentiable curve $\gamma$, then
Theorem~\ref{Opt:MainTheo} holds with the transversality conditions
\eqref{OPT:transversality} replaced by
\begin{multline*}
\Biggl[H(t,x(t),u(t),\l(t))-N\l(t)\LCD x(t)+N\dot{x}(t)\RIT\l(t)+\frac{\partial \phi}{\partial t}(t,x(t))\\
-\dot{\gamma}(t)\left(M \l(t)+N\RIT\l(t)-\frac{\partial \phi}{\partial x}(t,x(t))\right)\Biggr]_{t=T}=0.
\end{multline*}

\item If $T$ is fixed and $x(T)\geq K$ for some fixed $K\in\mathbb{R}$,
then Theorem~\ref{Opt:MainTheo} holds with the transversality conditions
\eqref{OPT:transversality} replaced by
\begin{gather*}
\left[M \l(t)+N\RIT\l(t)-\frac{\partial \phi}{\partial x}(t,x(t))\right]_{t=T}\leq 0,\\
(x(T)-K)\left[M \l(t)+N\RIT\l(t)-\frac{\partial \phi}{\partial x}(t,x(t))\right]_{t=T}=0.
\end{gather*}

\item If $x(T)$ is fixed and $T\leq K$ for some fixed $K\in\mathbb{R}$,
then Theorem~\ref{Opt:MainTheo} holds with the transversality conditions
\eqref{OPT:transversality} replaced by
$$
\left[H(t,x(t),u(t),\l(t))-N\l(t)\LCD x(t)+N\dot{x}(t)\RIT\l(t)
+\frac{\partial \phi}{\partial t}(t,x(t))\right]_{t=T}\geq 0,
$$
\begin{multline*}
\left[H(t,x(t),u(t),\l(t))-N\l(t)\LCD x(t)+N\dot{x}(t)\RIT\l(t)
+\frac{\partial \phi}{\partial t}(t,x(t))\right]_{t=T}\\
\times (T-K) = 0.
\end{multline*}
\end{enumerate}
\end{corollary}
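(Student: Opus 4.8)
The plan is to reuse the first-variation identity established in the proof of Theorem~\ref{Opt:MainTheo} --- the last display preceding its statement --- and to restrict, scenario by scenario, the class of admissible variations $(\delta x,\delta u,\delta\l,\delta T,\delta x_T)$. Write $\Phi_T$ for the bracketed quantity in the first line of \eqref{OPT:transversality} and $\Psi_T$ for the one in the second line, so that the boundary part of that identity reads $\delta T\,\Phi_T-\delta x_T\,\Psi_T$. In every one of the six cases the interior variations $\delta x$, $\delta u$, $\delta\l$ remain unconstrained and two-sided, so the fundamental lemma of the calculus of variations applies verbatim and forces the integrand to vanish; this already yields the Hamiltonian system \eqref{OPT:HamilSyst} and the stationary condition \eqref{OPT:stacionary}, irrespective of the endpoint restrictions. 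Only the two boundary terms have to be re-examined.

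I would first dispose of the purely equality-type items (1)--(4). Fixing $T$ imposes $\delta T=0$ and leaves $\delta x_T$ free, so that from the boundary part only $\Psi_T=0$ remains, giving (1); symmetrically, prescribing the value of $x$ at the still-variable terminal instant imposes $\delta x_T=0$ and leaves $\delta T$ free, so $\Phi_T=0$, giving (2); imposing both restrictions removes the whole boundary part, giving (3). For (4), the terminal relation $x(T)=\gamma(T)$ links the two perturbations: expanding $(x+\delta x)(T+\delta T)=\gamma(T+\delta T)$ to first order gives $\delta x_T=\dot\gamma(T)\,\delta T$, so the boundary part becomes $\delta T\bigl(\Phi_T-\dot\gamma(T)\,\Psi_T\bigr)$ with $\delta T$ arbitrary, whence the parenthesised factor must vanish --- exactly the stated condition.

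For the inequality-constrained items (5) and (6) I would replace the two-sided variation by a one-parameter family of admissible triplets $(x_\varepsilon,u_\varepsilon,T_\varepsilon)$, $\varepsilon\ge0$, and use that a minimizer satisfies $\tfrac{d}{d\varepsilon}J|_{\varepsilon=0^+}\ge0$. When the pertinent constraint is inactive at the optimum ($x(T)>K$ in (5), $T<K$ in (6)) the admissible perturbations of the active variable are two-sided, and one recovers the equalities $\Psi_T=0$, resp.\ $\Phi_T=0$, of (1), resp.\ (2); when it is active, only one sign of $\delta x_T$, resp.\ $\delta T$, is feasible, so the first-order condition degenerates into a one-sided inequality on $\Psi_T$, resp.\ $\Phi_T$, whose direction is read off from that feasible half-line. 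The two alternatives are then merged into the single pair formed by the sign condition and the complementary-slackness relation $(x(T)-K)\,\Psi_T=0$, resp.\ $(T-K)\,\Phi_T=0$, which reduces to the equality when the constraint is inactive and is automatic when it is active. The one place where care is genuinely needed --- and hence the main (if minor) obstacle --- is the correct identification of the cone of admissible perturbations at an active terminal constraint, together with the attendant sign of the inequality; everything else is a direct specialisation of the identity already in hand.
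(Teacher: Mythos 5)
Your proposal is correct and takes essentially the same route as the paper: cases (1)--(3) by simply zeroing the corresponding boundary variation, case (4) via the first-order expansion $\delta x_T=\dot{\gamma}(T)\delta T+O(\delta T^2)$, and cases (5)--(6) by distinguishing the active and inactive constraint, using one-sided variations and the sign condition on the first variation exactly as the paper does. The only cosmetic difference is that you make the complementary-slackness packaging and the one-parameter family $\varepsilon\ge 0$ explicit, which the paper leaves implicit.
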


\begin{proof}
The first three conditions are obvious. The fourth follows from
$$
\delta x_T=\gamma(T+\delta T)-\gamma(T)=\dot{\gamma}(T)\delta T+O(\delta T^2).
$$
To prove \textit{5}, observe that we have two possible cases. If $x(T)>K$,
then $\delta x_T$ may take negative and positive values, and so we get
$$
\left[M \l(t)+N\RIT\l(t)-\frac{\partial \phi}{\partial x}(t,x(t))\right]_{t=T}=0.
$$
On the other hand, if $x(T)=K$, then $\delta x_T\geq 0$ and so
$$
\left[M \l(t)+N\RIT\l(t)-\frac{\partial \phi}{\partial x}(t,x(t))\right]_{t=T}\leq 0.
$$
The proof of the last condition is similar.
\end{proof}

Case~1 of Corollary~\ref{OPT:MainCor} was proven in \cite{FreTorres}
for $(M,N)=(0,1)$ and $\phi \equiv 0$. Moreover, if $\a=1$, then we obtain the classical
necessary optimality conditions for the standard optimal control problem (see, e.g., \cite{Chiang}):
the Hamiltonian system
$$
\begin{cases}
\dot{x}(t) = \frac{\partial H}{\partial \l}(t,x(t),u(t),\l(t)),\\
\dot{\l}(t) = - \frac{\partial H}{\partial x}(t,x(t),u(t),\l(t));
\end{cases}
$$
the stationary condition $\displaystyle \frac{\partial H}{\partial u}(t,x(t),u(t),\l(t))=0$;
and the transversality condition $\l(T)=0$.


\subsection{Approximated integer order necessary optimality conditions}
\label{sec:2.2}

Using approximation \eqref{expanMom} up to order $K$, we can transform
the original problem \eqref{Opt:func}--\eqref{Opt:bound}
into the following classical problem:
\begin{equation*}
\mathrm{minimize} \quad
\tilde{J}(x,u,T)=\int_a^TL(t,x(t),u(t))\,dt+\phi(T,x(T))
\end{equation*}
subject to
\begin{equation*}
\begin{cases}
\dot{x}(t)=\displaystyle \frac{f(t,x(t),u(t))
-NA(t-a)^{-\a}x(t)+\sum_{p=2}^KNC_p(t-a)^{1-p-\a}V_p(t)}{M+NB(t-a)^{1-\a}},\\[0.25cm]
\dot{V}_p(t)=(1-p)(t-a)^{p-2}x(t), \quad p=2,\ldots,K,
\end{cases}
\end{equation*}
and
\begin{equation}
\label{App:bound}
\begin{cases}
x(a)=x_a,\\
V_p(a)=0, \quad p=2,\ldots,K,
\end{cases}
\end{equation}
where $A=A(\a,K)$, $B=B(\a,K)$ and $C_p=C(\a,p)$
are the coefficients in the approximation \eqref{expanMom}.
Now that we are dealing with an integer order problem,
we can follow a classical procedure (see, e.g., \cite{Kirk}),
by defining the Hamiltonian $H$ by
\begin{equation*}
\begin{split}
H=L(t,x,u)&+\frac{\l_1 \left(f(t,x,u) -NA(t-a)^{-\a}x
+\sum_{p=2}^KNC_p(t-a)^{1-p-\a}V_p\right)}{M+NB(t-a)^{1-\a}}\\
&+\sum_{p=2}^K \l_p (1-p)(t-a)^{p-2}x.
\end{split}
\end{equation*}
Let $\bm{\l}=(\l_1,\l_2,\ldots,\l_K)$ and $\mathbf{x}=(x,V_2,\ldots,V_K)$.
The necessary optimality conditions
\begin{equation*}
\frac{\partial H}{\partial u}=0, \quad
\begin{cases}
\dot{\mathbf{x}}=\displaystyle \frac{\partial H}{\partial \bm{\l}},\\[0.25cm]
\dot{\bm{\l}}=-\displaystyle \frac{\partial H}{\partial \mathbf{x}},
\end{cases}
\end{equation*}
result in a two point boundary value problem.
Assume that $(T^*,\mathbf{x}^*,\bm{u}^*)$ is the optimal triplet.
In addition to the boundary conditions \eqref{App:bound},
the transversality conditions imply
$$
\left[\frac{\partial \phi}{\partial \mathbf{x}}(T^*,
\mathbf{x}^*(T))\right]^{tr}\delta \mathbf{x}_T
+\left[H(T^*, \mathbf{x}^*(T),\bm{u}^*(T),\bm{\l}^*(T))
+\frac{\partial \phi}{\partial t}(T^*, \mathbf{x}^*(T))\right]\delta T=0,
$$
where $tr$ denotes the transpose.
Because $V_p$, $p=2,\ldots,K$, are auxiliary variables
whose values $V_p(T)$, at the final time $T$, are free, we have
$$
\l_p(T)=\frac{\partial\phi}{\partial V_p}\Big |_{t=T} = 0,
\quad p=2,\ldots,K.
$$
The value of $\l_1(T)$ is determined from the value of $x(T)$.
If $x(T)$ is free, then $\l_1(T)=\frac{\partial\phi}{\partial x}|_{t=T}$.
Whenever the final time is free, a transversality condition of the form
$$
\left[H\left(t,\mathbf{x}(t),\bm{u}(t),\bm{\l}(t)\right)
-\frac{\partial\phi}{\partial t}\left(t,\mathbf{x}(t)\right)\right]_{t=T}=0
$$
completes the required set of boundary conditions.


\section{A generalization}
\label{sec:3}

The aim is now to consider a generalization of the
optimal control problem \eqref{Opt:func}--\eqref{Opt:bound}
studied in Section~\ref{SecNecCond}. Observe that the initial point $t=a$ is in fact
the initial point for two different operators: for the integral in \eqref{Opt:func}
and, secondly, for the left Caputo fractional derivative given by the dynamic constraint
\eqref{Opt:dynamic}. We now consider the case where the lower bound of the integral
of $J$ is greater than the lower bound of the fractional derivative.
The problem is stated as follows:
\begin{equation}
\label{eq:gJ}
\mathrm{minimize} \quad J(x,u,T)=\int_A^TL(t,x(t),u(t))\,dt+\phi(T,x(T))
\end{equation}
under the constraints
\begin{equation}
\label{eq:gCS}
M \dot{x}(t)+N\LCD x(t)=f(t,x(t),u(t)) \quad \mbox{and} \quad x(A)=x_A,
\end{equation}
where $(M,N)\not=(0,0)$, $x_A$ is a fixed real, and $a<A$.

\begin{remark}
We have chosen to consider the initial condition
on the initial time $A$ of the cost integral,
but the case of initial condition $x(a)$ instead of $x(A)$
can be studied using similar arguments.
Our choice seems the most natural: the interval of interest is $[A,T]$
but the fractional derivative is a nonlocal operator and has ``memory''
that goes to the past of the interval $[A,T]$ under consideration.
\end{remark}

\begin{remark}
In the theory of fractional differential equations,
the initial condition is given at $t=a$. To the best of our
knowledge there is no general theory about unicity of solutions
for problems like \eqref{eq:gCS}, where the fractional derivative
involves $x(t)$ for $a<t<A$ and the initial condition is given at $t=A$.
Unicity of solution is, however, possible. Consider, for example,
${_0^C D ^\alpha _t} x(t)=t^2$. Applying the fractional integral to both sides of equality
we get $x(t)=x(0)+ 2 t^{2+\alpha}/\Gamma(3+\alpha)$ so, knowing a value for $x(t)$,
not necessarily at $t=0$, one can determine $x(0)$ and by doing so $x(t)$.
A different approach than the one considered here
is to provide an initialization function for $t\in[a,A]$.
This initial memory approach was studied for fractional
continuous-time linear control systems in \cite{dorota:t:cap}
and \cite{dorota:t:RL}, respectively for Caputo and Riemann--Liouville derivatives.
\end{remark}

The method to obtain
the required necessary optimality conditions follows the same procedure
as the one discussed before. The first variation gives
\begin{equation*}
\begin{split}
0 = \int_A^T&\Biggl[\frac{\partial H}{\partial x}\delta x+\frac{\partial H}{\partial u}\delta u
+\frac{\partial H}{\partial \l}\delta \l-\delta \l\left(M \dot{x}(t)+N\LCD x(t)\right)\\
&-\l(t)\left(M \dot{\delta x}(t)+N\LCD\delta x(t)\right)\Biggr]dt
+\frac{\partial \phi}{\partial x}(T,x(T))\left(\dot{x}(T)\delta T+\delta x(T)\right)\\
&+\frac{\partial\phi}{\partial t}(T,x(T)) \delta T
+\delta T\left[H(t,x,u,\l)-\l(t)\left(M \dot{x}(t)+N\LCD x(t)\right)\right]_{t=T},
\end{split}
\end{equation*}
where the Hamiltonian $H$ is as in \eqref{def:Hamiltonian}.
Now, if we integrate by parts, we get
$$
\int_A^T\l(t) \dot{\delta x}(t)\, dt
=-\int_A^T \delta x(t)  \dot{\l}(t)\,dt+\delta x(T)\l(T)
$$
and
\begin{equation*}
\begin{split}
\int_A^T & \l(t) \LCD \delta x(t)\,dt
=\int_a^T \l(t) \LCD \delta x(t)\,dt-\int_a^A\l(t) \LCD \delta x(t)\,dt\\
&=\int_a^T \delta x(t) \, \RDT\l(t)\,dt+[\delta x(t) \RIT\l(t)]_{t=a}^{t=T}
-\int_a^A \delta x(t) \, {_tD^\a_A}\l(t)\,dt\\
&\quad -[\delta x(t){_tI^{1-\a}_A}\l(t)]_{t=a}^{t=A}\\
&=\int_a^A \delta x(t) [\RDT\l(t)-{_tD^\a_A}\l(t)]\,dt
+\int_A^T \delta x(t) \, \RDT\l(t)\,dt\\
&\quad +\delta x(T)[\RIT\l(t)]_{t=T}
-\delta x(a)[{_aI^{1-\a}_T}\l(a)-{_aI^{1-\a}_A}\l(a)].
\end{split}
\end{equation*}
Substituting these relations into the first variation of $J$, we conclude that
\begin{equation*}
\begin{split}
\int_A^T&\left[\left(\frac{\partial H}{\partial x}
+M \dot{\l} -N\RDT\l\right)\delta x
+\frac{\partial H}{\partial u} \delta u
+\left( \frac{\partial H}{\partial \l}-M \dot{x}-N\LCD x\right)\delta \l \right]dt\\
&-N\int_a^A \delta x [\RDT\l-{_tD^\a_A}\l]\,dt
-\delta x[M \l+N\RIT\l-\frac{\partial \phi}{\partial x}(t,x)]_{t=T}\\
&+\delta T[H(t,x,u,\l)-\l[M \dot{x}+N\LCD x]
+\frac{\partial \phi}{\partial t}(t,x)
+\frac{\partial \phi}{\partial x}(t,x)\dot{x}]_{t=T}\\
&+N\delta x(a)[{_aI^{1-\a}_T}\l(a)-{_aI^{1-\a}_A}\l(a)]=0.
\end{split}
\end{equation*}
Repeating the calculations as before, we prove the following optimality conditions.

\begin{theorem}
\label{thm:mt:gen}
If the triplet $(x,u,T)$ is an optimal solution to problem \eqref{eq:gJ}--\eqref{eq:gCS},
then there exists a function $\l$ for which the following conditions hold:
\begin{itemize}
\item the \emph{Hamiltonian system}
\begin{equation*}
\begin{cases}
M \dot{\l}(t)-N\RDT\l(t) = - \displaystyle \frac{\partial H}{\partial x}(t,x(t),u(t),\l(t))\\[0.25cm]
M \dot{x}(t)+N\LCD x(t) = \displaystyle \frac{\partial H}{\partial \l}(t,x(t),u(t),\l(t))
\end{cases}
\end{equation*}
for all $t\in[A,T]$, and
$\RDT\l(t)-{_tD^\a_A}\l(t)=0$ for all $t\in[a,A]$;

\item the \emph{stationary condition}
$$
\frac{\partial H}{\partial u}(t,x(t),u(t),\l(t))=0
$$
for all $t\in[A,T]$;

\item the \emph{transversality conditions}
\begin{gather*}
\left[H(t,x(t),u(t),\l(t))-N\l(t)\LCD x(t)+N\dot{x}(t)\RIT\l(t)
+\frac{\partial \phi}{\partial t}(t,x(t))\right]_{t=T}=0,\\
\left[M \l(t)+N\RIT\l(t)-\frac{\partial \phi}{\partial x}(t,x(t))\right]_{t=T}=0,\\
\left[{_tI^{1-\a}_T}\l(t)-{_tI^{1-\a}_A}\l(t)\right]_{t=a}=0;
\end{gather*}
\end{itemize}
with the Hamiltonian $H$ given by \eqref{def:Hamiltonian}.
\end{theorem}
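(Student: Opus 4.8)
The plan is to follow the derivation already carried out for the fundamental problem, taking as starting point the last displayed identity preceding the statement, which holds for every admissible variation: a function $\delta x$ of class $C^1$ on $[a,T]$ with $\delta x(A)=0$ (forced by the condition $x(A)=x_A$), together with arbitrary $\delta u$ and $\delta\l$ on $[A,T]$ and an arbitrary number $\delta T$. Exactly as in the proof of Theorem~\ref{Opt:MainTheo}, I first introduce $\delta x_T=(x+\delta x)(T+\delta T)-x(T)$ and use Taylor's theorem to replace $\delta x(T)$ by $\delta x_T-\dot{x}(T)\delta T+O(\delta T^2)$, so that $\delta T$ and $\delta x_T$ become the two genuinely free terminal quantities. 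After this substitution the identity splits into: an integral over $[A,T]$ of the coefficients of $\delta x$, $\delta u$, $\delta\l$; an integral over $[a,A]$ against $\delta x$ with coefficient $\RDT\l-{_tD^\a_A}\l$; a $\delta x_T$ term and a $\delta T$ term, each a terminal bracket; a term proportional to $\delta x(a)$ with coefficient ${_aI^{1-\a}_T}\l(a)-{_aI^{1-\a}_A}\l(a)$; and $O(\delta T^2)$.

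Then I invoke the fundamental lemma of the calculus of variations on each independent block. Taking $\delta x$ supported in a compact subinterval of $(a,A)$, with $\delta u=\delta\l=0$ and $\delta T=0$, forces $\RDT\l(t)-{_tD^\a_A}\l(t)=0$ on $(a,A)$, hence on $[a,A]$ by continuity; this is the new component of the Hamiltonian system. Once this holds, the integral over $[a,A]$ vanishes for every $\delta x$, so next I take $\delta x$ supported in $(A,T)$ and recover the adjoint equation $M\dot{\l}-N\RDT\l=-\partial H/\partial x$ on $(A,T)$, hence on $[A,T]$; the coefficients of $\delta\l$ and $\delta u$ return, respectively, the control system and the stationary condition. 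Finally, choosing $C^1$ variations that vanish on $[A,T]$ but with $\delta x(a)\neq 0$ — legitimate, since no constraint is imposed at $t=a$ — isolates the third transversality condition $\left[{_tI^{1-\a}_T}\l(t)-{_tI^{1-\a}_A}\l(t)\right]_{t=a}=0$, while varying $\delta x_T$ and $\delta T$ independently with all interior variations set to zero yields the first two transversality conditions (dividing the $\delta T$-relation by $\delta T$ and letting $\delta T\to 0$ to dispose of the $O(\delta T^2)$ term).

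The only genuinely new step, and the one needing care, is the fractional integration by parts: one writes $\int_A^T\l\,\LCD\delta x\,dt=\int_a^T\l\,\LCD\delta x\,dt-\int_a^A\l\,\LCD\delta x\,dt$ and applies the integration by parts formula for Caputo derivatives recalled above on each of $[a,T]$ and $[a,A]$ separately (which requires $\l$ to be of class $C^1$ on $[a,T]$). The bookkeeping of the boundary terms at $t=a$, $t=A$, $t=T$ is where the extra transversality condition is born: the contribution at $t=A$ drops out because $\delta x(A)=0$, whereas the contribution at $t=a$ survives with coefficient ${_aI^{1-\a}_T}\l(a)-{_aI^{1-\a}_A}\l(a)$ because $\delta x(a)$ is free. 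Everything else is a verbatim repetition of the computation already displayed for problem \eqref{Opt:func}--\eqref{Opt:bound}, so I would not reproduce it in detail.
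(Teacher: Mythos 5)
Your proposal is correct and follows essentially the same route as the paper: the decomposition $\int_A^T\l\,\LCD\delta x\,dt=\int_a^T\l\,\LCD\delta x\,dt-\int_a^A\l\,\LCD\delta x\,dt$ with integration by parts on each piece, the surviving boundary term at $t=a$ giving the third transversality condition, and the rest repeating the argument of Theorem~\ref{Opt:MainTheo}. Your explicit localization of the variations (first in $(a,A)$, then in $(A,T)$, then at the endpoints) merely spells out what the paper compresses into ``repeating the calculations as before.''
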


\begin{remark}
If the admissible functions take fixed values at both $t=a$ and $t=A$,
then we only obtain the two transversality conditions evaluated at $t=T$.
\end{remark}


\section{Sufficient optimality conditions}
\label{sec:4}

In this section we show that, under some extra hypotheses,
the obtained necessary optimality conditions
are also sufficient.

\begin{theorem}
\label{thm:suff:cond}
Let $(\overline{x}, \overline{u}, \overline{\l})$ be a triplet satisfying conditions
\eqref{OPT:HamilSyst}--\eqref{OPT:transversality} of Theorem~\ref{Opt:MainTheo}.
Moreover, assume that
\begin{enumerate}
\item $L$ and $f$ are convex on $x$ and $u$, and $\phi$ is convex in $x$;
\item $T$ is fixed;
\item $\overline{\l}(t)\geq 0$ for all $t \in [a,T]$ or $f$ is linear in $x$ and $u$.
\end{enumerate}
Then $(\overline x,\overline u)$ is an optimal solution to problem
\eqref{Opt:func}--\eqref{Opt:bound}.
\end{theorem}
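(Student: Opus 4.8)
The plan is to take an arbitrary admissible pair $(x,u)$ — sharing the fixed terminal time $T$ and the initial value $x(a)=x_a$ — and to show directly that $J(x,u,T)-J(\overline x,\overline u,T)\geq 0$. Set $\eta=x-\overline x$, so that $\eta(a)=0$, and write $\overline L,\overline f$ for $L$ and $f$ evaluated along $(t,\overline x(t),\overline u(t))$ and $\partial\overline H/\partial x$, $\partial\overline H/\partial u$ for the partials of $H$ along $(t,\overline x(t),\overline u(t),\overline\l(t))$. First I would bound the integral part of $J(x,u,T)-J(\overline x,\overline u,T)$ from below using convexity of $L$ in $(x,u)$:
$$
\int_a^T\bigl(L(t,x,u)-\overline L\bigr)\,dt
\geq\int_a^T\left(\frac{\partial\overline L}{\partial x}\,\eta+\frac{\partial\overline L}{\partial u}\,(u-\overline u)\right)dt,
$$
and then substitute $\partial L/\partial x=\partial H/\partial x-\overline\l\,\partial f/\partial x$ and $\partial L/\partial u=\partial H/\partial u-\overline\l\,\partial f/\partial u$. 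The stationary condition \eqref{OPT:stacionary} annihilates the term with $\partial H/\partial u$, and the adjoint equation in \eqref{OPT:HamilSyst} turns $\partial H/\partial x$ into $-M\dot{\overline\l}+N\,\RDT\overline\l$.

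Next I would invoke hypotheses \textit{1} and \textit{3}: by convexity of $f$ in $(x,u)$ one has, pointwise in $t$, $\frac{\partial\overline f}{\partial x}\,\eta+\frac{\partial\overline f}{\partial u}\,(u-\overline u)\leq f(t,x,u)-\overline f$, and multiplying by $\overline\l$ preserves this inequality when $\overline\l\geq0$ and turns it into an identity when $f$ is linear. Either way the estimate becomes
$$
\int_a^T\bigl(L(t,x,u)-\overline L\bigr)\,dt
\geq\int_a^T\Bigl[\bigl(-M\dot{\overline\l}+N\,\RDT\overline\l\bigr)\eta-\overline\l\,\bigl(f(t,x,u)-\overline f\bigr)\Bigr]dt.
$$
Using the control system \eqref{Opt:dynamic} for both $(x,u)$ and $(\overline x,\overline u)$, I replace $f(t,x,u)-\overline f$ by $M\dot\eta+N\,\LCD\eta$, integrate the $M$-terms by parts in the ordinary sense and the $N$-terms by parts with the Caputo integration by parts formula recalled in the Introduction, applied to $y=\overline\l$ and $x=\eta$. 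Since $\eta(a)=0$ the boundary contributions at $t=a$ vanish, the two interior integrals containing $\RDT\overline\l$ cancel against each other, and what survives is
$$
\int_a^T\bigl(L(t,x,u)-\overline L\bigr)\,dt\geq-\eta(T)\Bigl[M\,\overline\l(T)+N\,\bigl(\RIT\overline\l\bigr)(T)\Bigr].
$$

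Finally I would account for the terminal cost: convexity of $\phi$ in $x$ gives $\phi(T,x(T))-\phi(T,\overline x(T))\geq\frac{\partial\phi}{\partial x}(T,\overline x(T))\,\eta(T)$, so that
$$
J(x,u,T)-J(\overline x,\overline u,T)\geq\eta(T)\left[\frac{\partial\phi}{\partial x}(T,\overline x(T))-M\,\overline\l(T)-N\,\bigl(\RIT\overline\l\bigr)(T)\right],
$$
and the bracket vanishes by the second transversality condition in \eqref{OPT:transversality}, which is exactly the one in force when $T$ is fixed (cf. Corollary~\ref{OPT:MainCor}, case~1). Hence $J(x,u,T)\geq J(\overline x,\overline u,T)$ and $(\overline x,\overline u)$ is optimal. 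The one step that requires care is the bookkeeping of the two integrations by parts: one must check that the fractional boundary term at $t=a$ really drops (it does, because $\eta(a)=0$) and that the leftover boundary term at $t=T$ is precisely the combination $M\overline\l(T)+N\bigl(\RIT\overline\l\bigr)(T)$ appearing in the transversality condition, so that the necessary conditions close the chain of inequalities exactly. It is also worth stressing where hypothesis \textit{2} enters: fixing $T$ is what makes $\eta=x-\overline x$ a bona fide variation over a common interval $[a,T]$ and removes the $\delta T$-transversality condition from the comparison.
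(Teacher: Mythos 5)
Your proposal is correct and follows essentially the same route as the paper: convexity of $L$ and $\phi$ to linearize the cost difference, substitution of the adjoint and stationary conditions, ordinary plus Caputo integration by parts with $\eta(a)=0$, the dynamics to trade $M\dot\eta+N\,\LCD\eta$ for $f-\overline f$, the transversality condition to kill the terminal boundary term, and convexity of $f$ together with $\overline\l\geq0$ (or linearity) to close the inequality. The only difference is the order in which the convexity of $f$ and the integration by parts are invoked, which is immaterial.
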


\begin{proof}
From \eqref{OPT:HamilSyst} we deduce that
$$
\frac{\partial L}{\partial x}(t,\overline x(t),\overline u(t))
=-M\dot{\overline \l}(t)+N\RDT\overline \l(t)
-\overline\l(t)\frac{\partial f}{\partial x}(t,\overline x(t),\overline u(t)).
$$
Using \eqref{OPT:stacionary},
$$
\frac{\partial L}{\partial u}(t,\overline x(t),\overline u(t))
=-\overline\l(t)\frac{\partial f}{\partial u}(t,\overline x(t),\overline u(t))
$$
and \eqref{OPT:transversality} gives
$[M\overline \lambda(t)+N{_tI_T^{1-\alpha}}\overline \lambda(t)
-\frac{\partial \phi}{\partial x}(t,\overline x(t))]_{t=T}=0$.
Let $(x,u)$ be admissible, i.e., let \eqref{Opt:dynamic}
and\eqref{Opt:bound} be satisfied for $(x,u)$. In this case,
\begin{equation*}
\begin{split}
J&(x,u)-J(\overline x,\overline u) \\
&=\int_a^T \left[L(t,x(t),u(t))-L(t,\overline x(t),\overline u(t))\right] dt
+\phi(T,x(T))-\phi(T,\overline x(T))\\
&\geq \int_a^T \left[\frac{\partial L}{\partial x}(t,\overline x(t),\overline u(t)) (x(t)-\overline x(t))
+\frac{\partial L}{\partial u}(t,\overline x(t),\overline u(t))(u(t)-\overline u(t))\right] dt\\
&\qquad +\frac{\partial \phi}{\partial x}(T,\overline x(T))(x(T)-\overline x(T))\\
&=\int_a^T \biggl[ -M\dot{\overline \l}(t)(x(t)-\overline x(t))+N\RDT\overline \l(t)(x(t)-\overline x(t))\\
&\qquad -\overline\l(t)\frac{\partial f}{\partial x}(t,\overline x(t),\overline u(t))(x(t)-\overline x(t))
-\overline\l(t)\frac{\partial f}{\partial u}(t,\overline x(t),\overline u(t))(u(t)-\overline u(t))\biggr] dt\\
&\qquad +\frac{\partial \phi}{\partial x}(T,\overline x(T))(x(T)-\overline x(T)).
\end{split}
\end{equation*}
Integrating by parts, and noting that $x(a)=\overline x(a)$, we obtain
\begin{equation*}
\begin{split}
J&(x,u)-J(\overline x,\overline u)\\
&=\int_a^T \overline \l(t)\Biggl[ M\left(\dot{x}(t)-\dot{\overline x}(t)\right)
+N\left(\LCD x(t)-\LCD \overline x(t)\right)
\end{split}
\end{equation*}
\begin{equation*}
\begin{split}
&\quad -\frac{\partial f}{\partial x}\left(t,\overline x(t),\overline u(t)\right)\left(x(t)-\overline x(t)\right)
-\frac{\partial f}{\partial u}\left(t,\overline x(t),\overline u(t)\right)\left(u(t)-\overline u(t)\right)\Biggr] dt\\
&\quad +\left[\frac{\partial \phi}{\partial x}(t,\overline x(t))
-M\overline \lambda(t)-N{_tI_T^{1-\alpha}}\overline \lambda(t)\right]_{t=T}
\left(x(T)-\overline x(T)\right)\\
&= \int_a^T \Biggl[\overline \l(t)\left[ f(t,x(t),u(t))-f\left(t,\overline x(t),\overline u(t)\right) \right]
-\overline\l(t)\frac{\partial f}{\partial x}(t,\overline x(t),\overline u(t))\left(x(t)-\overline x(t)\right)\\
&\quad -\overline\l(t)\frac{\partial f}{\partial u}(t,
\overline x(t),\overline u(t))\left(u(t)-\overline u(t)\right)\Biggr] dt\\
&\geq \int_a^T \overline \l(t) \Biggl[
\frac{\partial f}{\partial x}\left(t,\overline x(t),\overline u(t)\right)\left(x(t)-\overline x(t)\right)
+\frac{\partial f}{\partial u}\left(t,\overline x(t),\overline u(t)\right)\left(u(t)-\overline u(t)\right)\\
&\quad -\frac{\partial f}{\partial x}\left(t,\overline x(t),\overline u(t)\right)\left(x(t)-\overline x(t)\right)
-\frac{\partial f}{\partial u}\left(t,\overline x(t),\overline u(t)\right)\left(u(t)-\overline u(t)\right)\Biggr] dt\\
&= 0.
\end{split}
\end{equation*}
\end{proof}

\begin{remark}
If the functions in Theorem~\ref{thm:suff:cond} are strictly convex
instead of convex, then the minimizer is unique.
\end{remark}


\section{Numerical treatment and examples}
\label{sec:5}

Here we apply the necessary conditions of Section~\ref{SecNecCond}
to solve some test problems. Solving an optimal control problem,
analytically, is an optimistic goal and is impossible
except for simple cases. Therefore, we apply numerical and computational methods
to solve our problems. In each case we try to solve the problem either
by applying fractional necessary conditions or by approximating the problem
by a classical one and then solving the approximate problem.


\subsection{Fixed final time}
\label{sub:sec:fft}

We first solve a simple problem with fixed final time.
In this case the exact solution, i.e.,
the optimal control and the corresponding optimal trajectory,
is known, and hence we can compare it with the approximations
obtained by our numerical method.

\begin{example}
\label{exm41}
Consider the following optimal control problem:
$$
J(x,u)=\int_0^1 \left(t u(t)-(\a+2)x(t)\right)^2\,dt \longrightarrow \min
$$
subject to the control system
$$
\dot{x}(t)+{^C_0D^\a_t} x(t)=u(t)+t^2
$$
and the boundary conditions
$$
x(0)=0, \quad x(1)=\frac{2}{\Gamma(3+\a)}.
$$
The solution is given by
$$
\left(\overline x(t),\overline u(t)\right)
=\left(\frac{2t^{\a+2}}{\Gamma(\a+3)},\frac{2t^{\a+1}}{\Gamma(\a+2)}\right),
$$
because $J(x,u) \geq 0$ for all pairs $(x,u)$ and
$\overline x(0)=0$, $\overline x(1)=\frac{2}{\Gamma(3+\a)}$,
$\dot{\overline{x}}(t) = \overline{u}(t)$ and
${^C_0D^\a_t} \overline x (t)=t^2$ with $J(\overline x,\overline u)=0$.
It is trivial to check that $(\overline x,\overline u)$ satisfies
the fractional necessary optimality conditions given by
Theorem~\ref{Opt:MainTheo}/Corollary~\ref{OPT:MainCor}.
\end{example}

Let us apply the fractional necessary conditions
to the above problem. The Hamiltonian is
$H=\left(tu-(\a+2)x\right)^2+\l u+\l t^2$.
The stationary condition \eqref{OPT:stacionary} implies that
for $t \ne 0$
$$
u(t)=\frac{\a+2}{t}x(t)-\frac{\l(t)}{2t^2}
$$
and hence
\begin{equation}
\label{emx1Ham}
H=-\frac{\l^2}{4t^2}+\frac{\a+2}{t}x\l+t^2\l,
\quad t \ne 0.
\end{equation}
Finally, \eqref{OPT:HamilSyst} gives
$$
\begin{cases}
\dot{x}(t)+{^C_0D^\a_t} x(t)=-\frac{\l}{2t^2}+\frac{\a+2}{t}x(t)+t^2,\\
-\dot{\l}(t)+{_tD^\a_1}\l(t)=\frac{\a+2}{t}\l(t),
\end{cases}
\quad
\begin{cases}
x(0)=0,\\
x(1)=\frac{2}{\Gamma(3+\a)}.
\end{cases}
$$
At this point, we encounter a fractional boundary value problem that needs
to be solved in order to reach the optimal solution. A handful of methods can be found
in the literature to solve this problem. Nevertheless, we use approximations
\eqref{expanMom} and \eqref{expanMomR}, up to order $N$, that have been introduced
in \cite{Atan2} and used in \cite{Jelicic,Pooseh2}. With our choice of approximation,
the fractional problem is transformed into a classical (integer order) boundary value problem:
$$
\begin{cases}
\dot{x}(t)=\left[\left(\frac{\a+2}{t}-At^{-\a}\right)x(t)
+\sum_{p=2}^N C_pt^{1-p-\a}V_p(t)-\frac{\l(t)}{2t^2}
+t^2\right]\frac{1}{1+Bt^{1-\a}}\\
\dot{V}_p(t)=(1-p)t^{p-2}x(t), \quad p=2,\ldots,N\\
\dot{\l}(t)=\left[\left(A(1-t)^{-\a}-\frac{\a+2}{t}\right)\l(t)
-\sum_{p=2}^N C_p(1-t)^{1-p-\a}W_p(t)\right]\frac{1}{1+B(1-t)^{1-\a}}\\
\dot{W}_p(t)=-(1-p)(1-t)^{p-2}\l(t), \quad p=2,\ldots,N
\end{cases}
$$
subject to the boundary conditions
$$
\begin{cases}
x(0)=0,\quad x(1)=\frac{2}{\Gamma(3+\a)},\\
V_p(0)=0, \quad p=2,\ldots,N,\\
W_p(1)=0, \quad p=2,\ldots,N.
\end{cases}
$$
The solutions are depicted in Figure~\ref{exm41FNCFig}
for $N=2$, $N=3$ and $\a=1/2$. Since the exact solution
for this problem is known, for each $N$ we compute
the approximation error by using the maximum norm.
Assume that $\overline x(t_i)$ are the approximated values
on the discrete time horizon $a=t_0,t_1,\ldots,t_n$.
Then the error is given by
$$
E = \max_{i}(|x(t_i)-\overline x(t_i)|).
$$
\begin{figure}
\begin{center}
\subfigure[$x(t), N=2$]{\label{xN2}\includegraphics[scale=0.31]{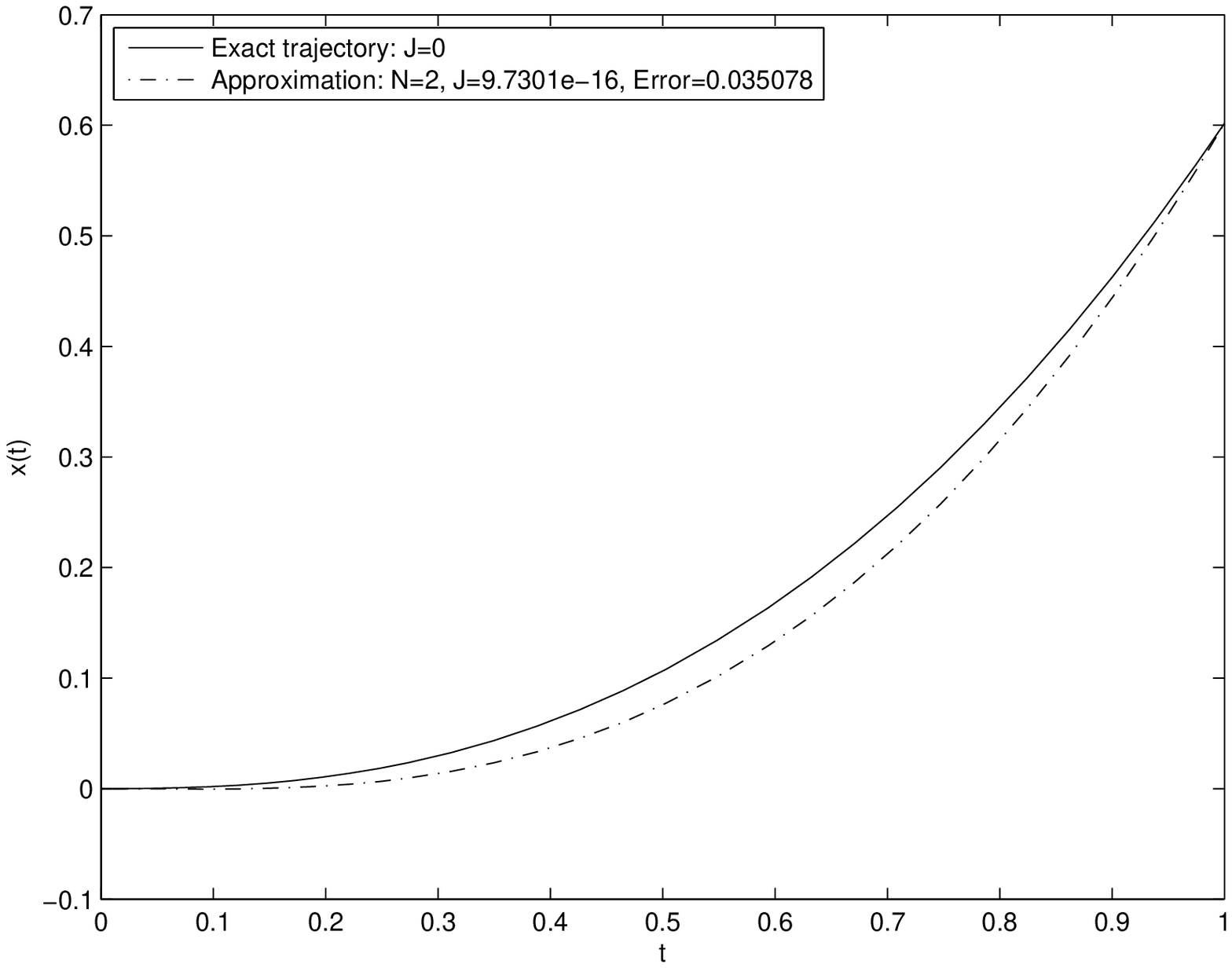}}
\subfigure[$u(t), N=2$]{\label{uN2}\includegraphics[scale=0.31]{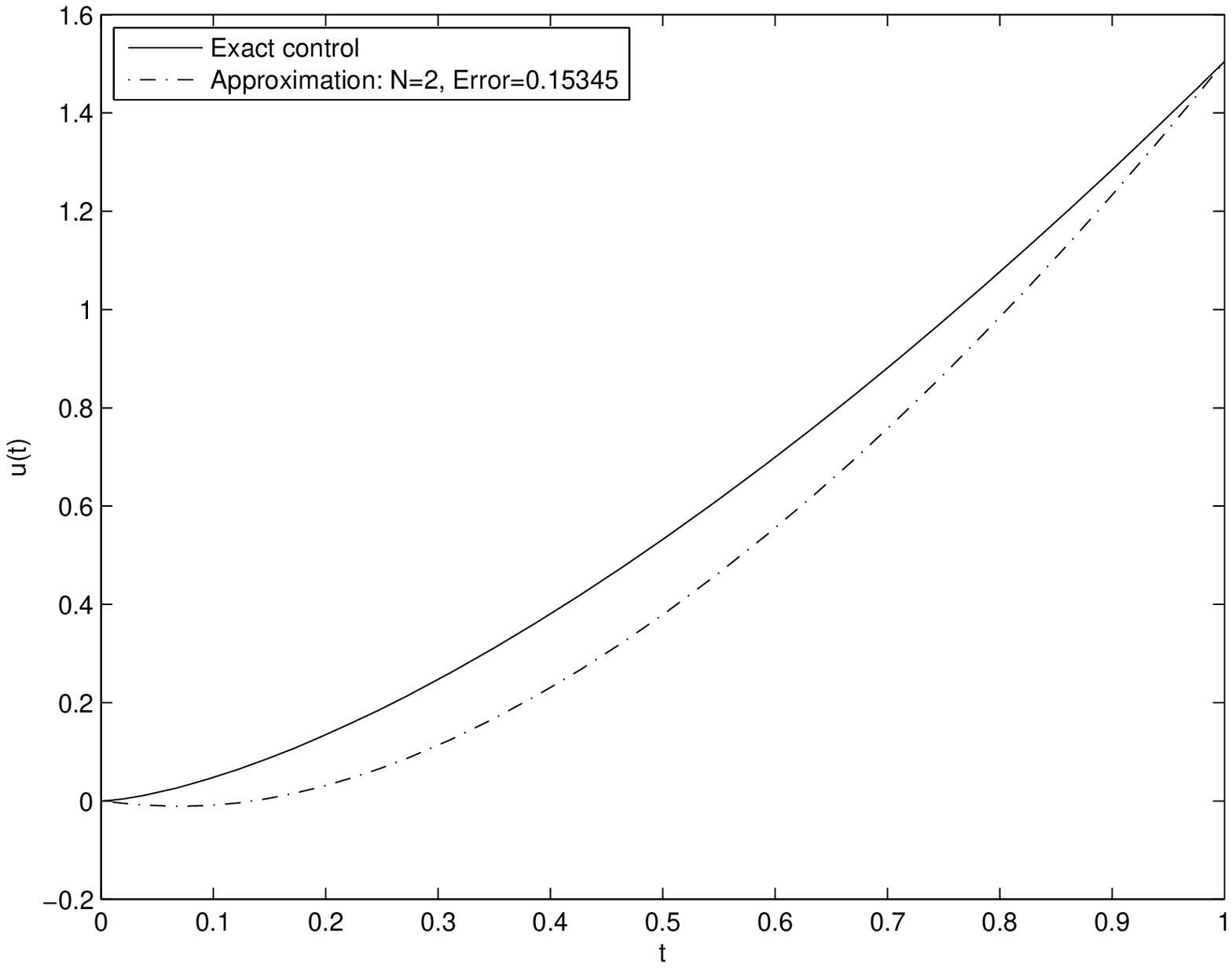}}
\subfigure[$x(t), N=3$]{\label{xN3}\includegraphics[scale=0.31]{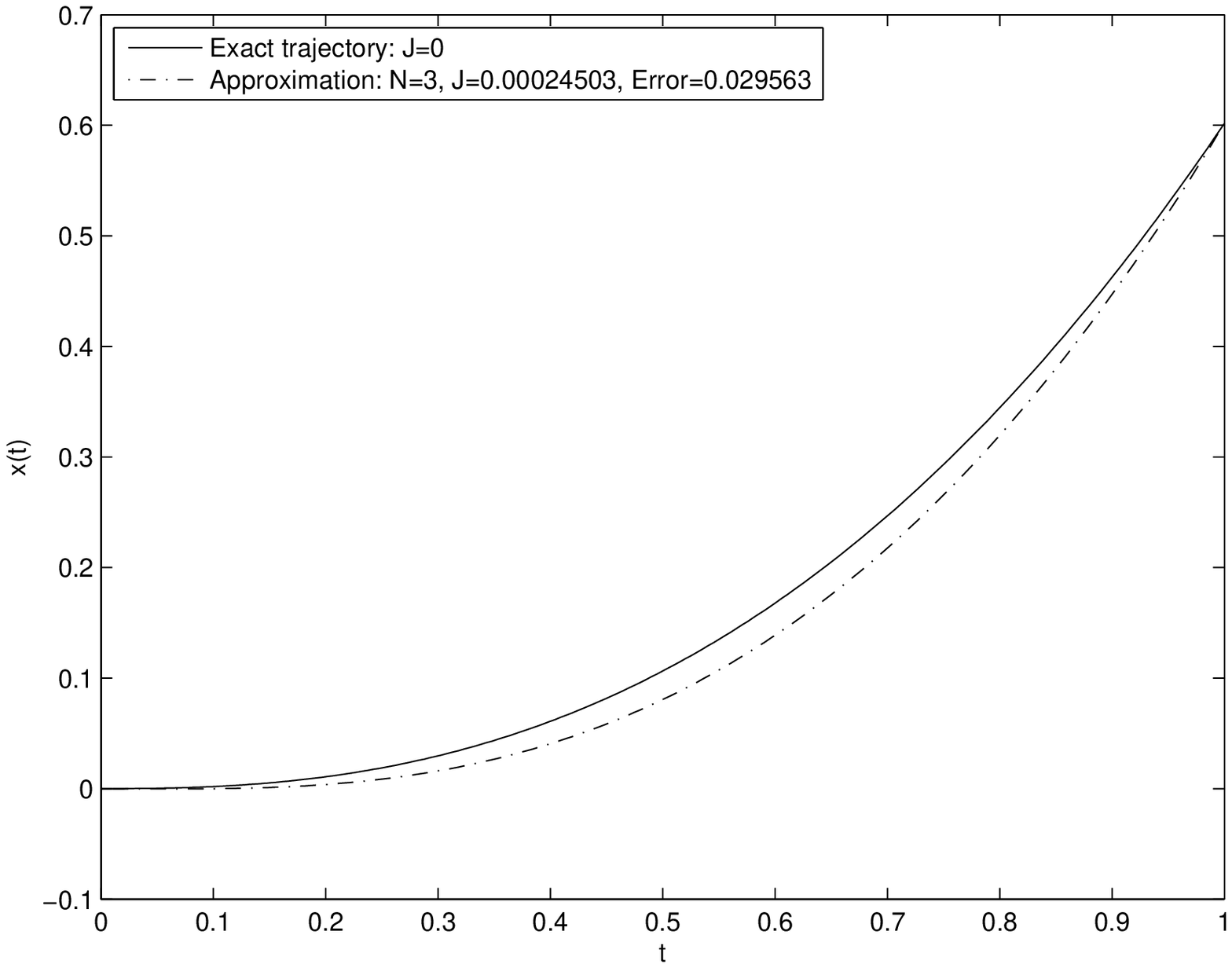}}
\subfigure[$u(t), N=3$]{\label{uN3}\includegraphics[scale=0.31]{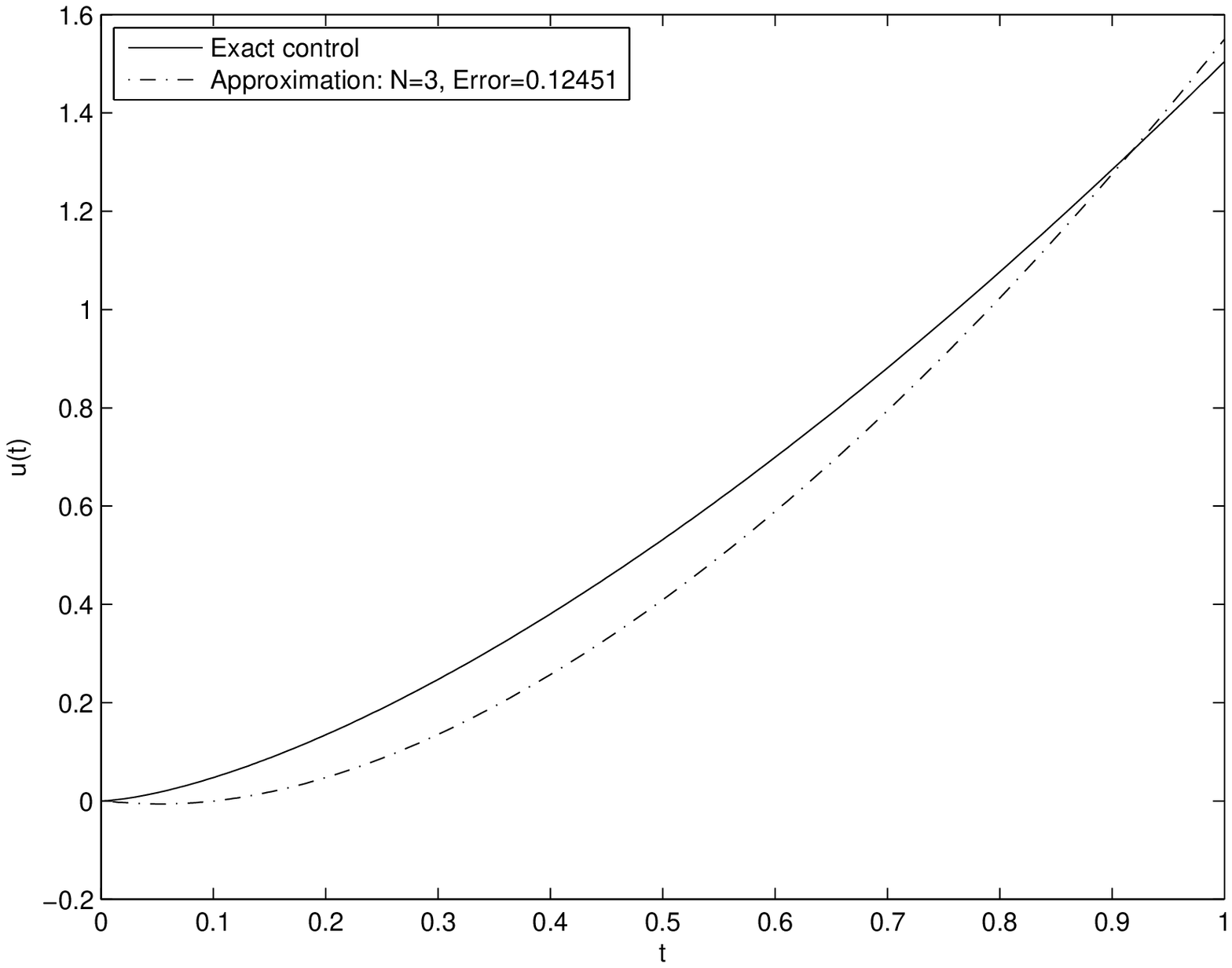}}
\end{center}
\caption{Exact solution (solid lines) for the problem in Example~\ref{exm41} with $\a = 1/2$
versus numerical solutions (dashed lines) obtained using approximations
\eqref{expanMom} and \eqref{expanMomR} up to order $N$
in the fractional necessary optimality conditions.}
\label{exm41FNCFig}
\end{figure}

Another approach is to approximate the original problem by using
\eqref{expanMom} for the fractional derivative. Following the procedure
discussed in Section~\ref{SecNecCond}, the problem of Example~\ref{exm41}
is approximated by
$$
\tilde{J}(x,u)=\int_0^1(tu-(\a+2)x)^2\,dt \longrightarrow \min
$$
subject to the control system
$$
\begin{cases}
\dot{x}(t)[1+B(\a,N)t^{1-\a}]+A(\a,N)t^{-\a}x(t)
-\sum_{p=2}^N C(\a,p)t^{1-p-\a}V_p(t)=u(t)+t^2\\
\dot{V}_p(t)=(1-p)t^{p-2}x(t)
\end{cases}
$$
and boundary conditions
$$
x(0)=0, \quad x(1)=\frac{2}{\Gamma(3+\a)},
\quad V_p(0)=0, \quad p=2,3,\ldots,N.
$$
The Hamiltonian system for this classical optimal control problem is
\begin{equation*}
\begin{split}
H=\left(tu-(\a+2)x\right)^2
&+\frac{\l_1(-A(\a,N)t^{-\a}x
+\sum_{p=2}^N C(\a,p)t^{1-p-\a}V_p+u+t^2)}{1+B(\a,N)t^{1-\a}}\\
&+\sum_{p=2}^N (1-p)t^{p-2}\l_p x.
\end{split}
\end{equation*}
Using the stationary condition $\frac{\partial H}{\partial u}=0$, we have
$$
u(t)=\frac{\a+2}{t}x(t)-\frac{\l_1(t)}{2t^2(1+B(\a,N)t^{1-\a})}
\quad \text{for } t \ne 0.
$$
Finally, the Hamiltonian becomes
\begin{equation}
\label{exmHamiltonian}
H=\phi_0\l_1^2+\phi_1x\l_1+\sum_{p=2}^N \phi_pV_p\l_1
+\phi_{N+1}\l_1+\sum_{p=2}^N(1-p)t^{p-2}x\l_p, \quad t \ne 0,
\end{equation}
where
\begin{equation}
\label{eq:phi0:phi1}
\phi_0(t)=\frac{-1}{4t^2(1+B(\a,N)t^{1-\a})^2},
\quad \phi_1(t)=\frac{\a+2-A(\a,N)t^{1-\a}}{t(1+B(\a,N)t^{1-\a})},
\end{equation}
and
\begin{equation}
\label{eq:phi2:phi3}
\phi_p(t)=\frac{C(\a,p)t^{1-p-\a}}{1+B(\a,N)t^{1-\a}},
\quad \phi_{N+1}(t)=\frac{t^2}{1+B(\a,N)t^{1-\a}}.
\end{equation}
The Hamiltonian system
$\dot{\mathbf{x}}=\frac{\partial H}{\partial \mathbf{\lambda}}$,
$\dot{\bm{\l}}=-\frac{\partial H}{\partial \mathbf{x}}$,
gives
$$
\begin{cases}
\dot{x}(t)=2\phi_0(t)\l_1(t)+\phi_1(t)x(t)+\sum_{p=2}^N \phi_p(t)V_p(t)+\phi_{N+1}(t)\\
\dot{V_p}=(1-p)t^{p-2}x(t), \quad p=2,\ldots,N\\
\dot{\lambda_1}=-\phi_1(t)\l_1(t)+\sum_{p=2}^N(p-1)t^{p-2}\l_p\\
\dot{\lambda_p}=-\phi_p(t)\l_1(t), \quad p=2,\ldots,N
\end{cases}
$$
subject to the boundary conditions
$$
\begin{cases}
x(0)=0\\
V_p(0)=0, \quad p=2,\ldots,N
\end{cases}
\qquad
\begin{cases}
x(1)=\frac{2}{\Gamma(3+\a)}\\
\l_p(1)=0, \quad p=2,\ldots,N.
\end{cases}
$$
This two-point boundary value problem was solved using Matlab's \textsf{bvp4c}
built-in function for $N=2$ and $N=3$. The results are depicted in Figure~\ref{exm41Fig}.
\begin{figure}
\begin{center}
\subfigure[$x(t), N=2$]{\includegraphics[scale=0.42]{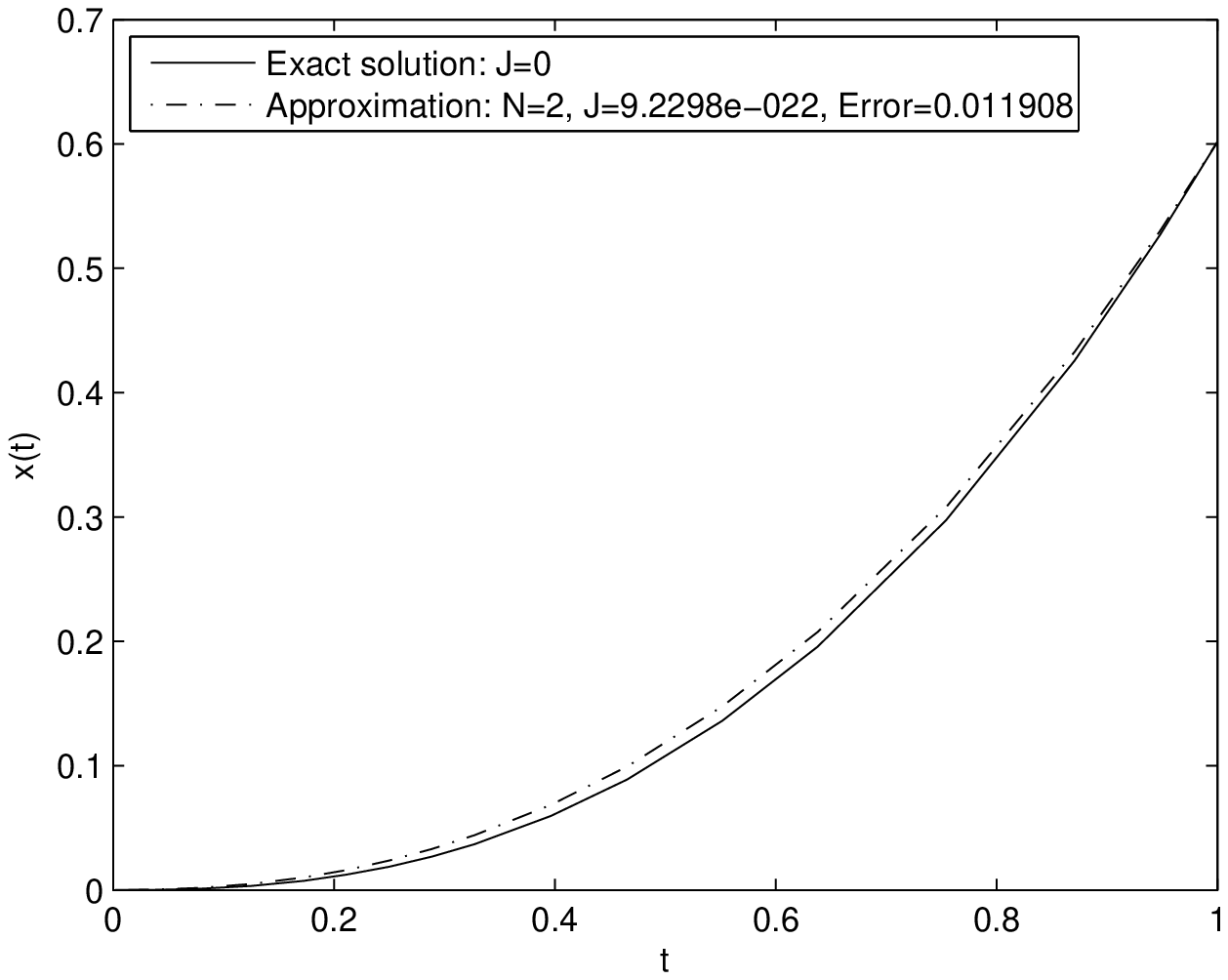}}
\subfigure[$u(t), N=2$]{\includegraphics[scale=0.42]{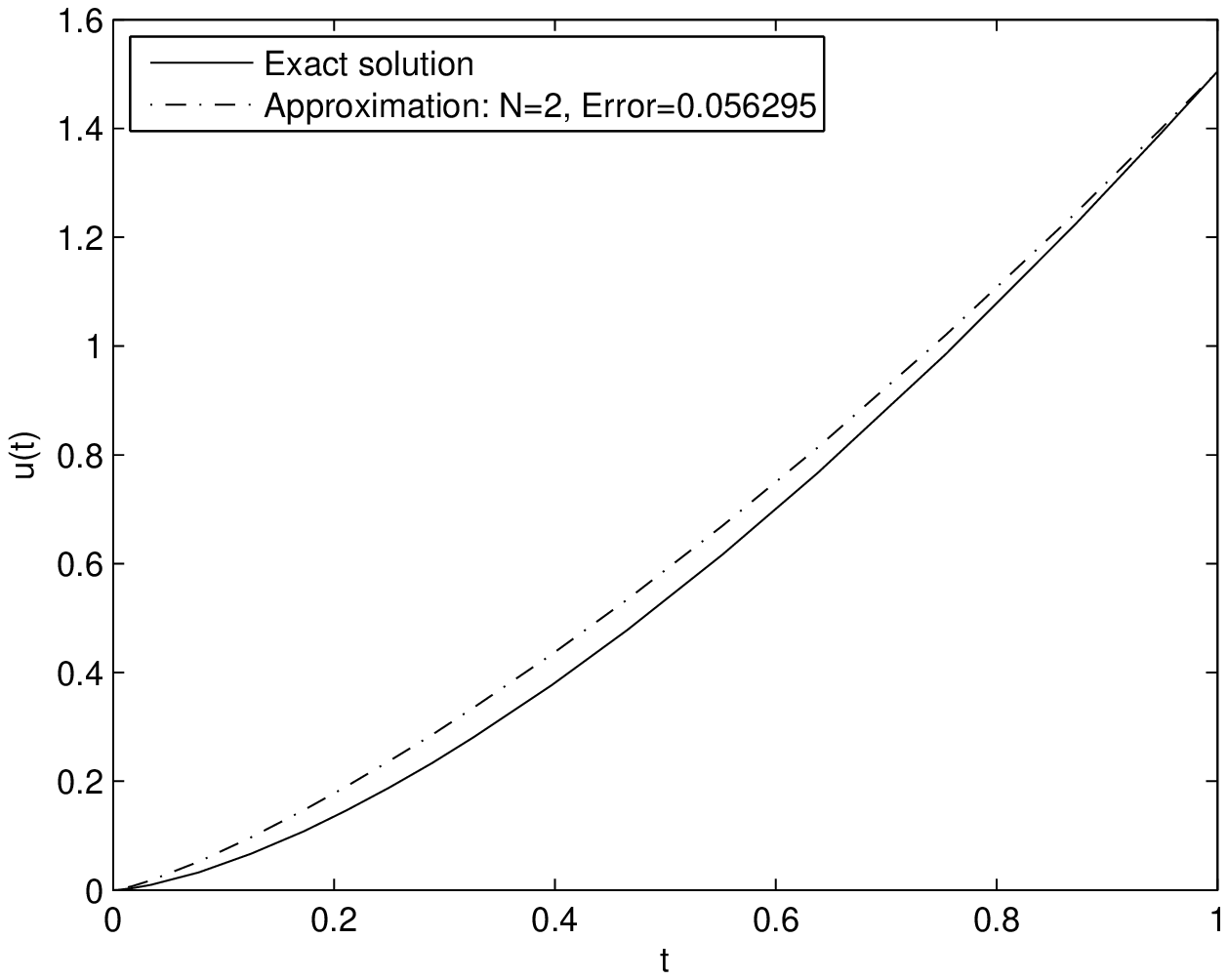}}
\subfigure[$x(t), N=3$]{\includegraphics[scale=0.42]{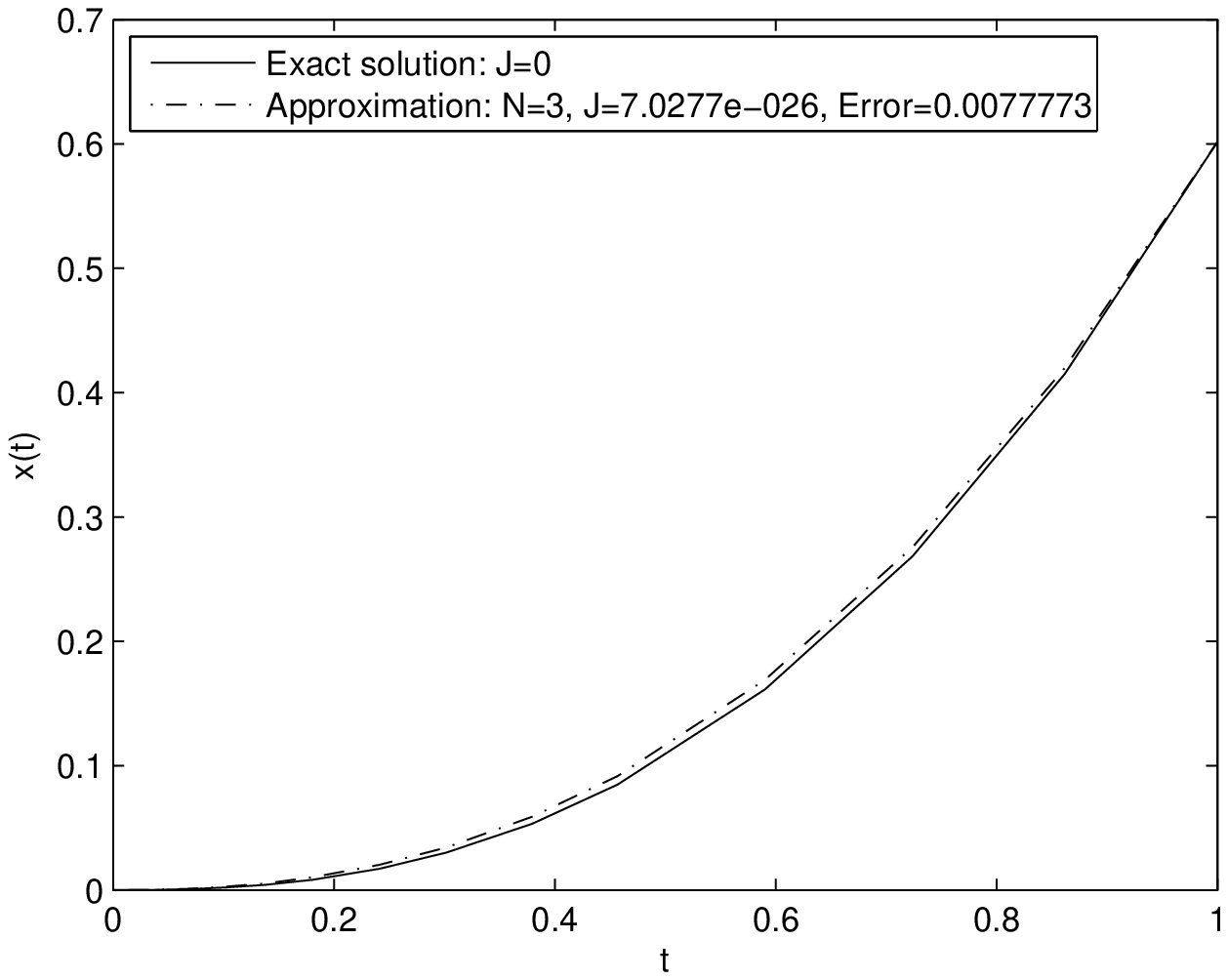}}
\subfigure[$u(t), N=3$]{\includegraphics[scale=0.42]{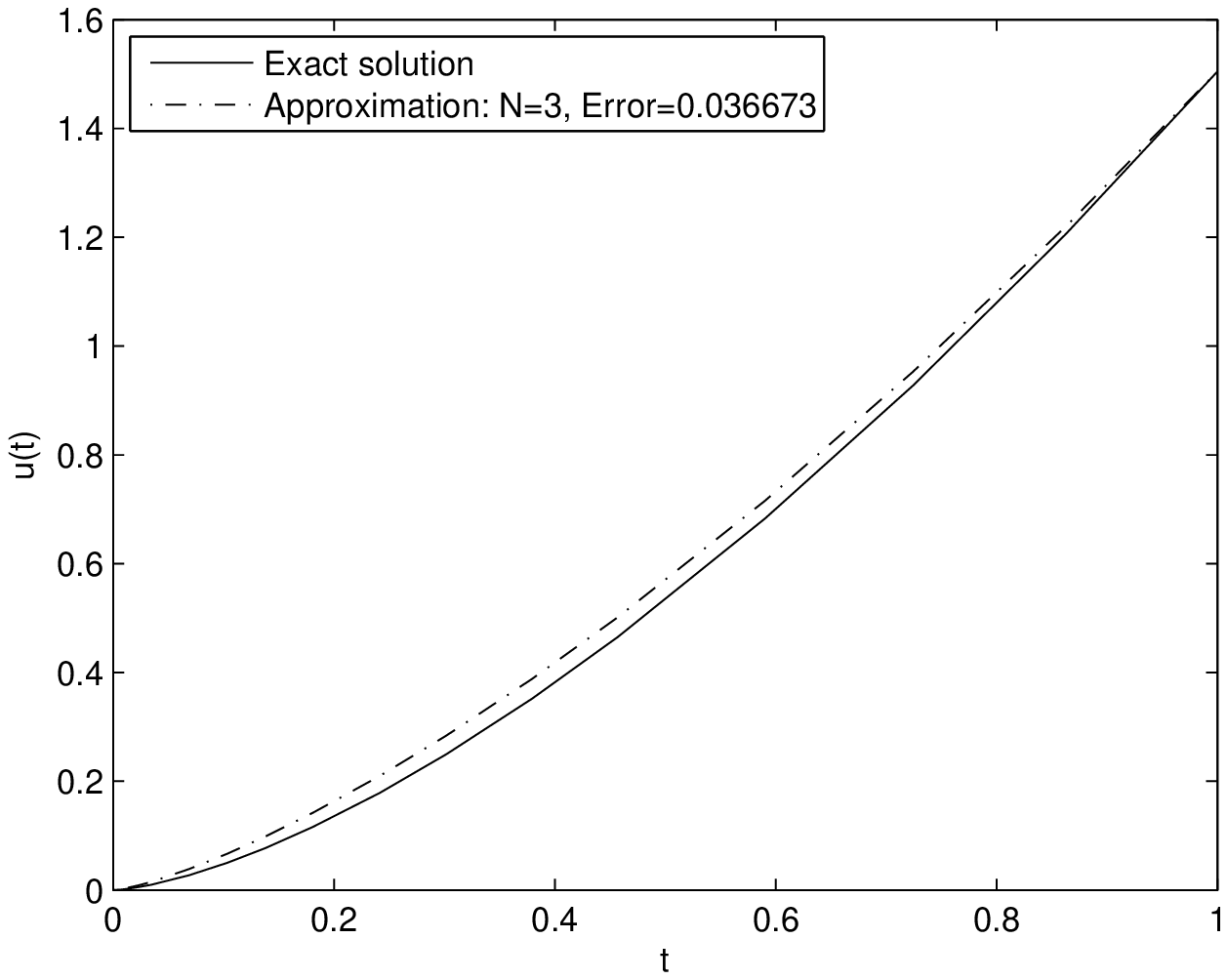}}
\end{center}
\caption{Exact solution (solid lines) for the problem in Example~\ref{exm41} with $\a = 1/2$
versus numerical solutions (dashed lines) obtained by approximating the fractional order
optimal control problem using \eqref{expanMom} up to order $N$ and then
solving the classical necessary optimality conditions with Matlab's \textsf{bvp4c}
built-in function.} \label{exm41Fig}
\end{figure}


\subsection{Free final time}
\label{sub:sec:freeft}

The two numerical methods discussed in Section~\ref{sub:sec:fft}
are now employed to solve a fractional order optimal control
problem with free final time $T$.

\begin{example}
\label{exm42}
Find an optimal triplet $(x(\cdot),u(\cdot),T)$ that minimizes
$$
J(x,u)=\int_0^T(tu-(\a+2)x)^2\,dt
$$
subject to the control system
$$
\dot{x}(t)+{^C_0D^\a_t} x(t)=u(t)+t^2\\
$$
and boundary conditions
$$
x(0)=0, \quad x(T)=1.
$$
An exact solution to this problem is not known
and we apply the two numerical procedures
already used with respect to the fixed final time
problem in Example~\ref{exm41}.
\end{example}

We begin by using the fractional necessary optimality conditions that,
after approximating the fractional terms, result in
$$
\begin{cases}
\dot{x}(t)=\left[\left(\frac{\a+2}{t}-At^{-\a}\right)x(t)
+\sum_{p=2}^N C_pt^{1-p-\a}V_p(t)
-\frac{\l(t)}{2t^2}+t^2\right]\frac{1}{1+Bt^{1-\a}}\\
\dot{V}_p(t)=(1-p)t^{p-2}x(t), \quad p=2,\ldots,N\\
\dot{\l}(t)=\left[\left(A(1-t)^{-\a}-\frac{\a+2}{t}\right)\l(t)
-\sum_{p=2}^N C_p(1-t)^{1-p-\a}W_p(t)\right]\frac{1}{1+B(1-t)^{1-\a}}\\
\dot{W}_p(t)=-(1-p)(1-t)^{p-2}\l(t), \quad p=2,\ldots,N
\end{cases}
$$
subject to the boundary conditions
$$
\begin{cases}
x(0)=0,\quad x(T)=1,\\
V_p(0)=0, \quad p=2,\ldots,N,\\
W_p(T)=0, \quad p=2,\ldots,N.
\end{cases}
$$
The only difference here with respect to Example~\ref{exm41}
is that there is an extra unknown, the terminal time $T$.
The boundary condition for this new unknown is chosen appropriately from the
transversality conditions discussed in Corollary~\ref{OPT:MainCor}, i.e.,
$$
[H(t,x,u,\l)-\l(t)\LCD x(t)+\dot{x}(t)\RIT\l(t)]_{t=T}=0,
$$
where $H$ is given as in \eqref{emx1Ham}.
Since we require $\l$ to be continuous,
$\RIT\l(t)|_{t=T}=0$ (cf. \cite[pag.~46]{Miller}) and so  $\l(T)=0$.
One possible way to proceed consists in translating the problem into the interval
$[0,1]$ by the change of variable $t=Ts$ \cite{Avvakumov}. In this setting,
either we add $T$ to the problem as a new state variable with dynamics $\dot{T}(s)=0$,
or we treat it as a parameter. We use the latter, to get the following
parametric boundary value problem:
$$
\begin{cases}
\dot{x}(s)=\frac{\left[\left(\frac{\a+2}{Ts}-A(Ts)^{-\a}\right)x(s)
+\sum_{p=2}^N C_p(Ts)^{1-p-\a}V_p(s)
-\frac{\l(s)}{2(Ts)^2}+(Ts)^2\right] T}{1+B(Ts)^{1-\a}},\\
\dot{V}_p(s)=T(1-p)(Ts)^{p-2}x(s), \quad p=2,\ldots,N,\\
\dot{\l}(s)=\frac{\left[\left(A(1-Ts)^{-\a}-\frac{\a+2}{Ts}\right)\l(s)
-\sum_{p=2}^N C_p(1-Ts)^{1-p-\a}W_p(s)\right] T}{1+B(1-Ts)^{1-\a}},\\
\dot{W}_p(s)=-T(1-p)(1-Ts)^{p-2}\l(s), \quad p=2,\ldots,N,
\end{cases}
$$
subject to the boundary conditions
$$
\begin{cases}
x(0)=0,\\
V_p(0)=0, \quad p=2,\ldots,N,\\
W_p(1)=0, \quad p=2,\ldots,N,
\end{cases}
\qquad
\begin{cases}
x(1)=1,\\
\l(1)=0.
\end{cases}
$$
This parametric boundary value problem is solved for $N=2$ and $\a=0.5$
with Matlab's \textsf{bvp4c} function. The result is shown
in Figure~\ref{exm42fig} (dashed lines).
\begin{figure}
\begin{center}
\subfigure[$x(t), N=2$]{\includegraphics[scale=0.31]{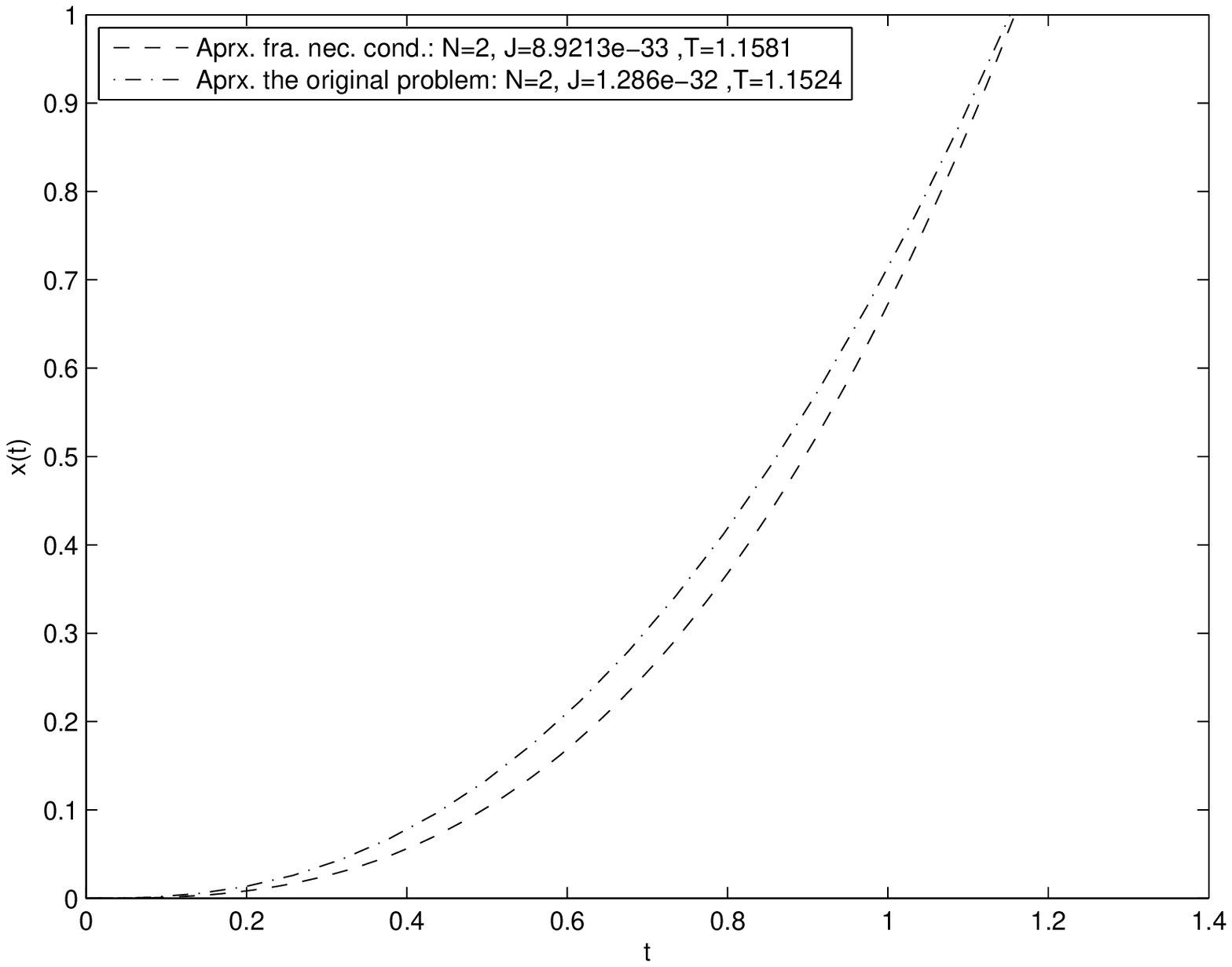}}
\subfigure[$u(t), N=2$]{\includegraphics[scale=0.31]{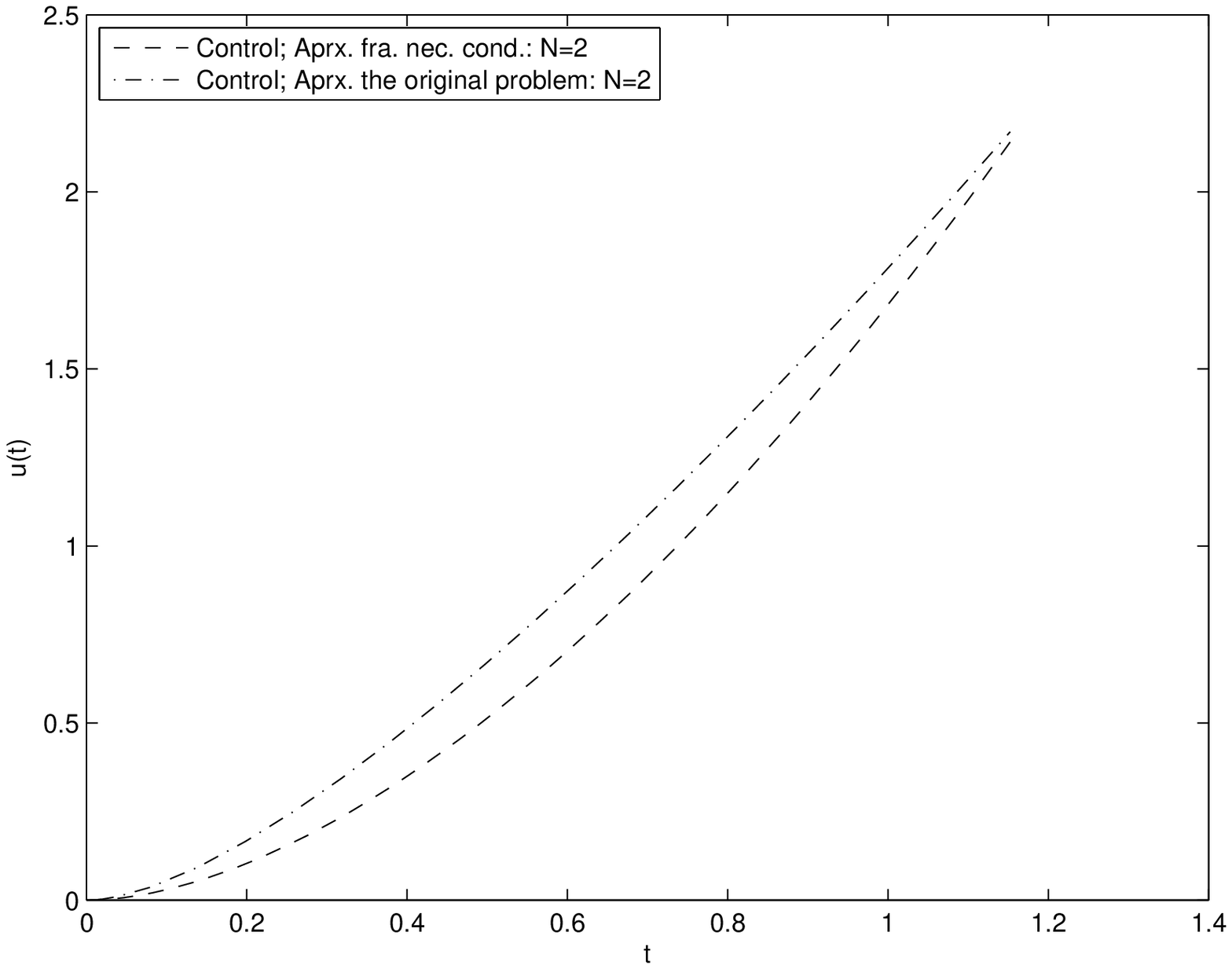}}
\end{center}
\caption{Numerical solutions to the free final time problem of Example~\ref{exm42}
with $\alpha = 1/2$, using fractional necessary optimality conditions (dashed lines) and
approximation of the problem to an integer order optimal control problem (dash-dotted lines).}
\label{exm42fig}
\end{figure}

We also solve Example~\ref{exm42} with $\alpha = 1/2$ by directly transforming it
into an integer order optimal control problem with free final time. As is well known
in the classical theory of optimal control, the Hamiltonian must vanish at the terminal point
when the final time is free, i.e., one has $H|_{t=T}=0$ with $H$ given
by \eqref{exmHamiltonian} \cite{Kirk}. For $N=2$, the necessary optimality conditions
give the following two point boundary value problem:
$$
\begin{cases}
\dot{x}(t)=2\phi_0(t)\l_1(t)+\phi_1(t)x(t)+\phi_2(t)V_2(t)+\phi_{3}(t)\\
\dot{V_2}=-x(t)\\
\dot{\lambda_1}=-\phi_1(t)\l_1(t)+x(t)\\
\dot{\lambda_2}=-\phi_2(t)\l_1(t),
\end{cases}
$$
where $\phi_0(t)$ and $\phi_1(t)$ are given by
\eqref{eq:phi0:phi1} and $\phi_2(t)$ and $\phi_{3}(t)$
by \eqref{eq:phi2:phi3} with $p = N = 2$.
The trajectory $x$ and corresponding $u$
are shown in Figure~\ref{exm42fig} (dash-dotted lines).


\section*{Acknowledgments}

Work supported by {\it FEDER} funds through {\it COMPETE}
--- Operational Programme Factors of Competitiveness
(``Programa Operacional Factores de Competitividade'')
and by Portuguese funds through the {\it Center for Research
and Development in Mathematics and Applications} (University of Aveiro)
and the Portuguese Foundation for Science and Technology
(``FCT--Funda\c{c}\~{a}o para a Ci\^{e}ncia e a Tecnologia''),
within project PEst-C/MAT/UI4106/2011 with COMPETE
number FCOMP-01-0124-FEDER-022690. Pooseh was also
supported by the FCT Ph.D. fellowship SFRH/BD/33761/2009.

The authors would like to thank two anonymous referees
for their careful reading of the manuscript
and for suggesting several useful changes;
and to Dr. Ryan Loxton for suggestions for improving the English.



\medskip

Received December 2012; revised January and February 2013.

\medskip


\end{document}